\newcommand\rmi{\hbox{\rm (i)}}
\newcommand\rmii{\hbox{\rm (ii)}}
\numberwithin{equation}{section}
\renewcommand{\Re}{\mathrm{Re}}
\newcommand{\Ran}{\mathrm{Ran}}
\newcommand{\Dom}{\mathrm{Dom}}
\newcommand{\N}{\mathbb{N}}
\newcommand{\R}{\mathbb{R}}
\newcommand{\C}{\mathbb{C}}
\DeclareMathOperator{\supp}{supp}
\DeclarePairedDelimiter{\abs}{\lvert}{\rvert}
\lbrace\begin{array}{@{}l@{}}}%
\newcommand*{\mint}[1]{%
	\mint@l{#1}{}%
}
\newcommand*{\mint@l}[2]{%
	\@ifnextchar\limits{%
		\mint@l{#1}%
	}{%
	\@ifnextchar\nolimits{%
		\mint@l{#1}%
	}{%
	\@ifnextchar\displaylimits{%
		\mint@l{#1}%
	}{%
	\mint@s{#2}{#1}%
}%
}%
}%
}
\newcommand*{\mint@s}[2]{%
	\@ifnextchar_{%
		\mint@sub{#1}{#2}%
	}{%
	\@ifnextchar^{%
		\mint@sup{#1}{#2}%
	}{%
	\mint@{#1}{#2}{}{}%
}%
}%
}
\def\mint@sub#1#2_#3{%
	\@ifnextchar^{%
		\mint@sub@sup{#1}{#2}{#3}%
	}{%
	\mint@{#1}{#2}{#3}{}%
}%
}
\def\mint@sup#1#2^#3{%
	\@ifnextchar_{%
		\mint@sub@sup{#1}{#2}{#3}%
	}{%
	\mint@{#1}{#2}{}{#3}%
}%
}
\def\mint@sub@sup#1#2#3^#4{%
	\mint@{#1}{#2}{#3}{#4}%
}
\def\mint@sup@sub#1#2#3_#4{%
	\mint@{#1}{#2}{#4}{#3}%
}
\newcommand*{\mint@}[4]{%
	\mathop{}%
	\mkern-\thinmuskip
	\mathchoice{%
		\mint@@{#1}{#2}{#3}{#4}%
		\displaystyle\textstyle\scriptstyle
	}{%
	\mint@@{#1}{#2}{#3}{#4}%
	\textstyle\scriptstyle\scriptstyle
}{%
\mint@@{#1}{#2}{#3}{#4}%
\scriptstyle\scriptscriptstyle\scriptscriptstyle
}{%
\mint@@{#1}{#2}{#3}{#4}%
\scriptscriptstyle\scriptscriptstyle\scriptscriptstyle
}%
\mkern-\thinmuskip
\int#1%
\ifx\\#3\\\else_{#3}\fi
\ifx\\#4\\\else^{#4}\fi  
}
\newcommand*{\mint@@}[7]{%
	\begingroup
	\sbox0{$#5\int\m@th$}%
	\sbox2{$#5\int_{}\m@th$}%
	\dimen2=\wd0 %
	\let\mint@limits=#1\relax
	\ifx\mint@limits\relax
	\sbox4{$#5\int_{\kern1sp}^{\kern1sp}\m@th$}%
	\ifdim\wd4>\wd2 %
	\let\mint@limits=\nolimits
	\else
	\let\mint@limits=\limits
	\fi
	\fi
	\ifx\mint@limits\displaylimits
	\ifx#5\displaystyle
	\let\mint@limits=\limits
	\fi
	\fi
	\ifx\mint@limits\limits
	\sbox0{$#7#3\m@th$}%
	\sbox2{$#7#4\m@th$}%
	\ifdim\wd0>\dimen2 %
	\dimen2=\wd0 %
	\fi
	\ifdim\wd2>\dimen2 %
	\dimen2=\wd2 %
	\fi
	\fi
	\rlap{%
		$#5%
		\vcenter{%
			\hbox to\dimen2{%
				\hss
				$#6{#2}\m@th$%
				\hss
			}%
		}%
		$%
	}%
	\endgroup
}
\def\overbracket#1{\mathop{\vbox{\ialign{##\crcr\noalign{\kern3\p@}
				\downbracketfill\crcr\noalign{\kern3\p@\nointerlineskip}
				$\hfil\displaystyle{#1}\hfil$\crcr}}}\limits}
\def\underbracket#1{\mathop{\vtop{\ialign{##\crcr
				$\hfil\displaystyle{#1}\hfil$\crcr\noalign{\kern3\p@\nointerlineskip}
				\upbracketfill\crcr\noalign{\kern3\p@}}}}\limits}
\def\overparenthesis#1{\mathop{\vbox{\ialign{##\crcr\noalign{\kern3\p@}
				\downparenthfill\crcr\noalign{\kern3\p@\nointerlineskip}
				$\hfil\displaystyle{#1}\hfil$\crcr}}}\limits}
\def\underparenthesis#1{\mathop{\vtop{\ialign{##\crcr
				$\hfil\displaystyle{#1}\hfil$\crcr\noalign{\kern3\p@\nointerlineskip}
				\upparenthfill\crcr\noalign{\kern3\p@}}}}\limits}
\def\downparenthfill{$\m@th\braceld\leaders\vrule\hfill\bracerd$}
\def\upparenthfill{$\m@th\bracelu\leaders\vrule\hfill\braceru$}
\def\upbracketfill{$\m@th\makesm@sh{\llap{\vrule\@height3\p@\@width.7\p@}}%
	\leaders\vrule\@height.7\p@\hfill
	\makesm@sh{\rlap{\vrule\@height3\p@\@width.7\p@}}$}
\def\downbracketfill{$\m@th
	\makesm@sh{\llap{\vrule\@height.7\p@\@depth2.3\p@\@width.7\p@}}%
	\leaders\vrule\@height.7\p@\hfill
	\makesm@sh{\rlap{\vrule\@height.7\p@\@depth2.3\p@\@width.7\p@}}$}
\theoremstyle{theorem}
\newtheorem{theorem}{\sc Theorem}[section]  
\newtheorem{proposition}[theorem]{\sc Proposition}   
\newtheorem{lemma}[theorem]{\sc Lemma}
\theoremstyle{remark}
\newtheorem{definition}[theorem]{\sc Definition}
\newtheorem{remark}[theorem]{Remark}
\newcommand{\loc}{\mathrm{loc}}
\newcommand{\glob}{\mathrm{glob}}
\newcommand{\dd}{\mathrm{d}}
\newcommand{\Dir}{\mathrm{Dir}}
\newcommand{\Ls}{\mathcal{L}}
\newcommand{\Pc}{\mathcal{P}}
\newcommand{\Addresses}{Dipartimento di Scienze Matematiche ``Giuseppe Luigi Lagrange'', Politecnico di Torino, Corso Duca degli Abruzzi 24, 10129 Torino, Italy\\
\textit{E-mail address}: \texttt{tommaso.bruno@polito.it} 
}
\date{}
\begin{document}
\title[The Riesz transform of the Ornstein-Uhlenbeck~operator]{Endpoint results for the Riesz transform of the Ornstein-Uhlenbeck~operator}

\author{Tommaso Bruno}

	\maketitle
	\begin{abstract}\noindent
In this paper we introduce a new atomic Hardy space $X^1(\gamma)$ adapted to the Gauss measure $\gamma$, and prove the boundedness of the first order Riesz transform associated with the Ornstein-Uhlenbeck operator from $X^1(\gamma)$ to $L^1(\gamma)$. We also provide a new, short and almost self-contained proof of its weak-type $(1,1)$.

\bigskip

\noindent \textbf{Mathematics Subject Classification (2000)} 42B20 -- 42B25 -- 42B30.

\smallskip

\noindent \textbf{Keywords.} Riesz transforms $\cdot$ Ornstein-Uhlenbeck $\cdot$ Hardy space $\cdot$ weak type $\cdot$  endpoint result

	\end{abstract}
	
\section{Introduction}\label{notation}
For $x\in \R^n$, let $\dd \gamma(x)=\pi^{-n/2}e^{-|x|^2}\, \dd x$ be the Gauss measure and denote with $\mathcal{L}$ the Ornstein-Uhlenbeck operator, i.e.\ the closure  on $L^2(\gamma)$ of the operator given by 
\[ -\tfrac{1}{2}\Delta  + x\cdot \nabla \]
on the space $C_c^\infty$ of smooth and compactly supported functions. It is well known that $\mathcal{L}$ is self-adjoint. We denote by $\nabla \Ls^{-1/2}$ its first order Riesz transform, which can be defined on $L^2(\gamma)$ via the spectral theorem (see Section~\ref{intKernels} below).

For every $p\in (1,\infty)$, the operator $\nabla \Ls^{-1/2}$ extends to a bounded operator on $L^p(\gamma)$, but this fails when $p=1$ (see e.g.~\cite{MMS0} or~\cite{Sjogren}). This motivates the interest in boundedness results involving $L^1(\gamma)$, which we call endpoint results, for this operator. Concerning boundedness properties \emph{from} $L^1(\gamma)$, the following result is well known:
\begin{theorem}\label{11OU}
$\nabla \Ls^{-1/2}$ is of weak type $(1,1)$, i.e.\ bounded from $L^1(\gamma)$ to $L^{1,\infty}(\gamma)$.
\end{theorem}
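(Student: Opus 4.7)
The plan is to represent $\nabla \mathcal{L}^{-1/2}$ as a singular integral operator and establish the weak-type $(1,1)$ bound through a local--global decomposition, following the scheme initiated by Mauceri and Meda. I would first write the Ornstein--Uhlenbeck heat semigroup via the explicit Mehler kernel,
\[ e^{-t\mathcal{L}} f(x) = \int_{\R^n} M_t(x,y)\, f(y)\, \dd\gamma(y), \]
and combine this with Bochner's subordination identity
\[ \mathcal{L}^{-1/2} = \frac{1}{\sqrt{\pi}}\int_0^\infty t^{-1/2}\, e^{-t\mathcal{L}}\, \dd t, \]
so that the Riesz transform acquires the integral representation
\[ \nabla\mathcal{L}^{-1/2} f(x) = \int_{\R^n} K(x,y)\, f(y)\, \dd\gamma(y), \quad K(x,y) = \frac{1}{\sqrt{\pi}}\int_0^\infty t^{-1/2}\,\nabla_x M_t(x,y)\, \dd t, \]
in which $\nabla_x M_t(x,y)$ is read off from the explicit Mehler formula. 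This is the common starting point; any gain in brevity will have to come from efficient estimates further downstream.

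Next I would introduce the admissible region
\[ L = \bigl\{(x,y)\in\R^n\times\R^n : |x-y|\leq c\min(1,|x|^{-1})\bigr\} \]
for a suitable $c>0$ and split $K = K\mathbf{1}_L + K\mathbf{1}_{L^c} =: K_{\loc} + K_{\glob}$. On $L$ one has $e^{-|y|^2}\sim e^{-|x|^2}$, Gauss measure is locally doubling and comparable to Lebesgue measure up to a smooth weight, and, for short times, the Mehler kernel is comparable to the Euclidean heat kernel. These three facts allow one to compare the local Riesz transform, modulo controllable error terms, with the classical Euclidean Riesz transform; the weak-type $(1,1)$ bound then follows from Calder\'on--Zygmund theory on the space of homogeneous type obtained by restricting $\gamma$ to admissible balls.

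The main effort goes into the global part $K_{\glob}$. Since $\gamma$ is not globally doubling, Calder\'on--Zygmund machinery is unavailable and $K_{\glob}$ must be controlled by direct kernel estimates. The plan is to perform the $t$-integration in the formula for $K(x,y)$ via the change of variable $s=e^{-t}$ (or $r=\tanh t$), producing a tractable pointwise bound for the time integral of $\nabla_x M_t(x,y)$, and then estimate the resulting quantity in terms of the geometry of $(x,y)$ relative to the admissible scale. The hoped-for brevity comes from isolating the off-diagonal decay of $\nabla_x M_t$ in the long-time regime and combining it with the Gaussian tail $e^{-|y|^2}$, yielding either an $L^1(\gamma)\to L^1(\gamma)$ bound via an adjoint Schur test built on the self-adjointness of $\mathcal{L}$, or, when that fails, a direct level-set argument for the $L^{1,\infty}(\gamma)$ quasinorm. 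Controlling the $t$-integration outside the admissible region in a self-contained way, and organising the geometric regimes so as not to reproduce the heavy case analysis of earlier proofs, is where I expect the principal difficulty to lie.
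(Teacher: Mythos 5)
Your scaffolding matches the paper's: subordination formula for the kernel of $\nabla\mathcal{L}^{-1/2}$, a cutoff near the diagonal $N_1$ splitting the operator into a local and a global part, local Calder\'on--Zygmund theory adapted to admissible balls (the paper invokes \cite[Theorem 2.7]{GMST} after establishing the size and gradient estimates of Lemma~\ref{lemmalocalOU}), and a direct kernel estimate for the global piece. Up to that point you are on track.

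The gap is in the global part, which is also where you say you expect the difficulty to lie; the proposal does not contain the idea that makes it work. First, the ``adjoint Schur test'' route is a dead end: the global kernel $K_{\nabla\mathcal{L}^{-1/2},\glob}$ is not dominated by an integrable kernel in the sense required for $L^1(\gamma)\to L^1(\gamma)$ boundedness, and the result to prove is only weak $(1,1)$, so a strong-type argument of this kind cannot be expected to close. Second, the ``direct level-set argument'' is the right shape but has no content as stated; one needs a specific pointwise majorant and a specific geometric mechanism. The paper's contribution (refining P\'erez--Soria~\cite{PerSor}) is precisely to construct an intermediate kernel $\overline{M}$ on the global region $G$, built from the Mehler maximal kernel after the rescalings $\tau(s)=\log\frac{1+s}{1-s}$ and $s=\beta\sigma$ from~\cite{GMST,GMMST}, so that
\[
|K_{\nabla\mathcal{L}^{-1/2}}(x,y)|\lesssim \overline{M}(x,y)\lesssim e^{|x|^2-|y|^2}\Bigl((1+|x|)^n \wedge (|x|\sin\theta)^{-n}\Bigr)\quad\text{on }G,
\]
with $\theta$ the angle between $x$ and $y$, and then the weak $(1,1)$ of the operator with this last kernel is exactly \cite[Lemma 4.4]{GMMST} (the ``forbidden region'' argument). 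The first inequality is the real computation of the paper (Proposition~\ref{rieszglobalOU}): extracting the factor $(|x+y|/|x-y|)^{n/2}\eta(x,y)$, bounding $\sqrt{\alpha}\int_0^{1/\beta}\frac{1+\sigma}{\sigma^{(n+3)/2}}e^{-\frac{1}{4}\alpha\varphi(\sigma)}\,\dd\sigma$ by $\Psi(x,y)=\max(1,\alpha^{-n/2})$ after splitting off a power of the supremum in~\eqref{supPsiOU}, and inverting $\varphi$. None of this is suggested by the proposal. Without the comparison to $\overline{M}$ and the reduction to \cite[Lemma 4.4]{GMMST}, the plan has no mechanism for producing the $L^{1,\infty}(\gamma)$ bound on the global region, so the proposal does not constitute a proof.
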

The proof of this result when $n=1$ is due to Muckenhoupt~\cite{Mucken}; in arbitrary dimension to Fabes, Gutiérrez and Scotto~\cite{FGS}. A new proof of this fact, shorter but still rather involved, was given by Pérez and Soria~\cite{PerSor} who used related results of Pérez~\cite{Per} and Men\'arguez, Pérez and Soria~\cite{MenPerSor}. 

The question of finding a subspace of $L^1(\gamma)$ mapped by $\nabla\Ls^{-1/2}$  \emph{into} $L^1(\gamma)$ has been considered more recently. In the pioneering paper~\cite{MM}, Mauceri and Meda introduced an atomic Hardy space $H^1(\gamma)$ adapted to the Gauss measure and studied boundedness properties of certain singular integral operators associated with $\Ls$ from this space to $L^1(\gamma)$. Among other results, they proved that the imaginary powers $\Ls^{iu}$ and the adjoint Riesz transform $\Ls^{-1/2}\nabla^*$ are bounded from $H^1(\gamma)$ to $L^1(\gamma)$. A few years later, however, the same authors and Sj\"ogren~\cite{MMS} proved that, though the Riesz transform $\nabla \Ls^{-1/2}$ is bounded from $L^\infty$ to the dual of $H^1(\gamma)$ in any dimension, it is bounded from $H^1(\gamma)$ to $L^1(\gamma)$ if and only if $n=1$. The problem of finding an appropriate subspace of $L^1(\gamma)$ mapped boundedly to $L^1(\gamma)$ by $\nabla\Ls^{-1/2}$ was addressed by Portal~\cite{Portal} who introduced a new Gaussian Hardy space $h^1(\gamma)$ and proved that $\nabla \Ls^{-1/2}$ is bounded from $h^1(\gamma)$ to $L^1(\gamma)$. The Hardy space $h^1(\gamma)$ is defined equivalently either by conical square functions or by a maximal function.

Portal's proof hinges on a theory of tent spaces for the Gauss measure developed by the same author and Maas and Van Nerven~\cite{MaasVNPortal2}. Though the tent spaces introduced in~\cite{MaasVNPortal2} admit an atomic decomposition and Portal's space is defined as a retract of a tent space via a Calder\'on reproducing formula, an explicit atomic characterization of $h^1(\gamma)$ is not provided in~\cite{Portal}.  This is our main motivation to explore a different approach to the problem, which we present in the first part of this paper. Indeed, atomic decompositions are a useful tool to prove boundedness of linear operators: in many circumstances (see e.g.~\cite{MM0, MSV}), it is enough to check that an operator maps atoms boundedly in some target space $Y$ to extend it to a bounded operator from the whole atomic space to $Y$. Inspired by the work of Mauceri, Meda and Vallarino~\cite{MMVAtomic} for the Riesz transforms on certain noncompact manifolds of infinite volume, we introduce a new \emph{atomic} Gaussian Hardy space $X^1(\gamma)$, strictly contained in the space $H^1(\gamma)$ of Mauceri and Meda, and we prove
\begin{theorem}\label{teobound-RieszX1}
$\nabla \Ls^{-1/2}$ is bounded from $X^1(\gamma)$ to $L^1(\gamma)$.
\end{theorem}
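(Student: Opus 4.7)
By the atomic decomposition of $X^1(\gamma)$, it is enough to prove a uniform bound $\|\nabla\mathcal{L}^{-1/2} a\|_{L^1(\gamma)} \leq C$ for every $X^1$-atom $a$. Let such an atom be supported in an admissible ball $B=B(c_B,r_B)$, with $\int a\, \dd\gamma=0$ and $\|a\|_{L^2(\gamma)}\leq \gamma(B)^{-1/2}$, and write $R=\nabla\mathcal{L}^{-1/2}$. The first step is the standard split
\[
\|Ra\|_{L^1(\gamma)} \leq \int_{2B} |Ra|\, \dd\gamma + \int_{(2B)^c} |Ra|\, \dd\gamma.
\]
The local term is handled by Cauchy--Schwarz together with the $L^2(\gamma)$-boundedness of $R$:
\[
\int_{2B} |Ra|\, \dd\gamma \leq \gamma(2B)^{1/2}\,\|Ra\|_{L^2(\gamma)} \leq C\,\gamma(2B)^{1/2}\|a\|_{L^2(\gamma)} \leq C,
\]
the last step using the doubling property of $\gamma$ on admissible balls.

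For the far term I would exploit the subordination formula
\[
\nabla\mathcal{L}^{-1/2}f(x)=\frac{1}{\sqrt{\pi}}\int_0^\infty t^{-1/2}\nabla e^{-t\mathcal{L}}f(x)\,\dd t
\]
and the explicit Mehler kernel representation of $e^{-t\mathcal{L}}$ to obtain an integral kernel $k(x,y)$ for $R$, then decompose it as $k=k_{\mathrm{loc}}+k_{\mathrm{glob}}$, with $k_{\mathrm{loc}}$ concentrated near the diagonal at admissible scales and $k_{\mathrm{glob}}$ carrying the superexponential off-diagonal decay inherited from the Gauss factor. On the local piece, classical Calder\'on--Zygmund reasoning applies: using the vanishing integral of $a$, one writes
\[
R_{\mathrm{loc}} a(x) = \int_B \bigl[k_{\mathrm{loc}}(x,y)-k_{\mathrm{loc}}(x,c_B)\bigr]\, a(y)\, \dd\gamma(y),
\]
and standard $y$-gradient estimates on $k_{\mathrm{loc}}$, valid at admissible scales, yield $\int_{(2B)^c}|R_{\mathrm{loc}}a|\,\dd\gamma\leq C$, in the spirit of the Mauceri--Meda analysis of $H^1(\gamma)$. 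For the global piece one reduces, via Fubini, to a uniform estimate of the form $\int|k_{\mathrm{glob}}(x,y)|\,\dd\gamma(x)\leq C$ for $y\in B$, which combined with $\|a\|_{L^1(\gamma)}\leq 1$ yields $\int |R_{\mathrm{glob}}a|\,\dd\gamma\leq C$.

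The main obstacle is precisely the global estimate. The failure of $\nabla\mathcal{L}^{-1/2}:H^1(\gamma)\to L^1(\gamma)$ in dimension $n\geq 2$ proved in~\cite{MMS} shows that no argument appealing only to $L^2$-size, cancellation, and support in a Mauceri--Meda admissible ball can succeed. The atomic definition of $X^1(\gamma)$ must therefore impose an additional restriction on admissible atoms---tightening the permissible radii $r_B$ in terms of $|c_B|$, or adding a further cancellation condition in the spirit of~\cite{MMVAtomic}---and checking that this extra constraint tames the off-diagonal Mehler integral against $a$ is where I expect the real technical effort to lie. This is also what accounts for $X^1(\gamma)$ being strictly smaller than $H^1(\gamma)$.
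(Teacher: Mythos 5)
Your local estimate on $2B$ is essentially the paper's (the paper uses $4B$, but that is immaterial), and your diagnosis at the end is correct: the vanishing-integral cancellation alone cannot close the far estimate, because $\nabla\Ls^{-1/2}$ fails to map $H^1(\gamma)$ into $L^1(\gamma)$ when $n\geq 2$, and the stronger cancellation condition in the definition of an $X^1$-atom (namely $a\in q^2(\bar B)^\perp$, not merely $\int a\,\dd\gamma=0$) must be used. But you stop there, and the strategy you sketch for the far term --- a local/global splitting of the Riesz kernel, $y$-gradient estimates against the zero-integral cancellation, and a uniform $L^1_x$ bound on the global kernel --- is not how the paper exploits that extra cancellation, and you correctly acknowledge you do not see how to make it work. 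This is a genuine gap.

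The key idea you are missing is a factorization, not a kernel split. The paper writes
\[
\nabla \Ls^{-1/2} a \;=\; \nabla \Ls^{1/2}\,\bigl(\Ls^{-1} a\bigr),
\]
and the whole point of requiring $a\perp q^2(\bar B)$ is that it forces $\Ls^{-1}a$ to remain \emph{supported} in $\bar B$, with the quantitative gain $\|\Ls^{-1}a\|_{L^2(\gamma)}\leq r_B^2\,\gamma(B)^{-1/2}$ (Proposition~\ref{support:presOU}). The proof of this support-preservation statement is the real technical content: one shows $q^2(\bar B)^\perp = q^2(B)^\perp \subset h^2(B)^\perp = \Ran(\Ls_B)$ via a Dirichlet-problem analysis on $B$, and then applies a Faber--Krahn bound on the first Dirichlet eigenvalue to get the $r_B^2$ factor. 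Once $\Ls^{-1}a$ is an $L^2$ function supported in $\bar B$, the far estimate reduces to an off-diagonal $L^1$ bound for the much tamer operator $\nabla\Ls^{1/2}$, namely $\|\nabla\Ls^{1/2}f\|_{L^1((4B)^c,\gamma)}\lesssim r_B^{-2}\|f\|_{L^1(B,\gamma)}$ for $f$ supported in $\bar B$ (Lemma~\ref{Lemma4BOU}); the $r_B^{-2}$ loss here is exactly cancelled by the $r_B^2$ gain from the Faber--Krahn step. Without the factorization through $\Ls^{-1}$ and its support preservation on $X^1$-atoms, the extra cancellation in the definition of $X^1(\gamma)$ never actually enters the computation, which is why your outlined route cannot close.
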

In the second part of the paper we provide a new proof of the weak type $(1,1)$ of $\nabla \Ls^{-1/2}$ (Theorem~\ref{11OU}) shorter and simpler than those appearing up to now in the literature. This is obtained by suitably combining some ideas of Pérez and Soria~\cite{PerSor} with some techniques introduced by Garc\'ia-Cuerva, Mauceri, Sj\"ogren and Torrea~\cite{GMST} and the same authors and Meda~\cite{GMMST}. Except for an elementary result~\cite[Lemma 4.4]{GMMST} and the theory developed in~\cite{GMST} for ``local'' Calder\'on-Zygmund operators, which can be considered an adaptation of the classical Calder\'on-Zygmund theory to the Gaussian setting in a certain neighbourhood of the diagonal of $\R^n\times \R^n$, our proof is self-contained. 

\smallskip

In the remaining of this section, we fix the notation and introduce the Riesz transform and some spectral multipliers of $\Ls$ which will be of use. The definition of $X^1(\gamma)$ and the boundedness of $\nabla \Ls^{-1/2}$ from $X^1(\gamma)$ to $L^1(\gamma)$ is the object of Section~\ref{Sec:X1:OU}, while the new proof of the weak type $(1,1)$ of $\nabla \Ls^{-1/2}$ occupies Section~\ref{Sec:11OU}. Further details are given at the beginning of these two sections.

\subsection{Integral kernels}\label{intKernels}
The $L^2(\gamma)$-spectrum of the Ornstein-Uhlenbeck operator $\Ls$ is the set of nonnegative integers $\{0,1,\dots\}$, and its eigenfunctions are (tensor product of) Hermite polynomials. Its spectral resolution $(\mathcal{P}_k)$, $k=0,1,\dots$ is the family of orthogonal projectors of $L^2(\gamma)$ onto the subspaces generated by the Hermite polynomials. It is also well known that $\Ls$ is the infinitesimal generator of the Mehler semigroup $e^{-t\Ls}$, whose kernel $M_t$ with respect to the Lebesgue measure\footnote{Given a bounded operator $T$ on $L^2(\gamma)$, we say that a distribution $K_T$ on  $\R^n \times \R^n$ is its Schwartz kernel \emph{with respect to the Lebesgue measure} if
\[
Tf(x)= \int_{\R^n} K_T(x,y) f(y)\, \dd y
\]
for a.e.\ $x\in \R^n$.} is
\[
M_t(x,y)= \frac{1}{\pi^{n/2}(1-e^{-2t})^{n/2}}\exp\left( -\frac{|e^{-t}x -y|^2 }{1-e^{-2t}}\right).
\]
We refer the reader e.g.\ to~\cite{Sjogren} for further details.

For every $z\in \C$, with a slight abuse of notation, we define
\begin{equation}\label{Lz}
\Ls^z = \sum_{k=1}^\infty k^z \mathcal{P}_k, \qquad \Dom(\Ls^z)= \bigg\{f\in L^2(\gamma)\colon \sum_{k=1}^\infty k^{2\Re z} \|\mathcal{P}_k\|_2^2<\infty\bigg\}.
\end{equation}
If $\Re z <0$, $\Ls^z$ is bounded on $L^2(\gamma)$ and $\Dom(\Ls^z)=L^2(\gamma)$. If $\Re z \geq 0$, observe that $C_c^\infty \subset \Dom(\Ls^z)$ by the decomposition $\Ls^z = \Ls^{z-N} \Ls^N$ where $N=[\Re z] +1$. \par

Let $\Pi_0$ be the orthogonal projection
\[
\Pi_0\colon L^2(\gamma) \to \ker(\Ls)^\perp=\left\{f\in L^2(\gamma)\colon \int f\, \dd \gamma=0 \right\}.
\]
In terms of the spectral resolution $(\Pc_k)$ of $\Ls$, $\Pi_0 = I-\Pc_0$, since 
\[ \Pc_0\colon L^2(\gamma) \to \ker (\Ls)= \C, \quad \Pc_0 f = \int f\, \dd \gamma.\]
Observe moreover that $\Ran(\Ls)$ is closed, since $\Ls$ is closed and has spectral gap. Thus $\ker(\Ls)^\perp =\Ran(\Ls)$. We shall denote the space $\Pi_0 L^2(\gamma)$ also by $L^2_0(\gamma)$. Observe that 
\begin{equation*}\label{eqdagger}
\Ls \Ls^{-1} f = \Pi_0 f \quad \forall \, f\in L^2(\gamma), \qquad \Ls^{-1} \Ls f = \Pi_0 f \quad \forall\, f\in \Dom(\Ls),
\end{equation*}
and in particular
\begin{equation}\label{eqdagger}
\Ls \Ls^{-1} f = f \quad \forall \, f\in L^2_0(\gamma), \qquad \Ls^{-1} \Ls f = f \quad \forall\, f\in \Dom(\Ls)\cap L^2_0(\gamma).
\end{equation}
For every $b\in \R\setminus \N$, the kernel of the operator $\Ls^{b}$ with respect to the Lebesgue measure is
\begin{align*}
K_{\Ls^{b}}(x,y)&= \frac{1}{\Gamma(-b)}\int_0^\infty t^{-b-1}(M_t(x,y)-\pi^{-n/2}e^{-|y|^2})\, \dd t\nonumber
\\&= \frac{1}{\Gamma(-b)} \int_0^1 (-\log r)^{-b-1} (M_{(-\log r)}(x,y)-\pi^{-n/2}e^{-|y|^2} )\, \frac{\dd r}{r}
\end{align*}
where we used the change of variables $t=-\log r$. See e.g.~\cite{GMST, GMST2}. In particular, for every $j=1,\dots,n$, the kernel of the operator $\nabla \Ls^{1/2}$ is
\begin{align}\label{nucleoDL12OU}
K_{\nabla\Ls^{1/2}}(x,y)= -\pi^{\frac{n+1}{2}} e^{|x|^2-|y|^2} \int_0^1 \frac{(-\log r)^{-3/2}}{(1-r^2)^{(n+2)/2}} (rx-y) e^{-\frac{|x-ry|^2}{1-r^2}}\, \dd r,
\end{align}
while the kernel of the Riesz transform associated with $\Ls$, i.e.\ the operator $\nabla \Ls^{-1/2}$, is
\begin{align}
K_{\nabla \Ls^{-1/2}}(x,y)&=-\frac{2}{\pi^{(n+1)/2}} \int_0^\infty \frac{t^{-1/2} e^{-t}}{(1-e^{-2t})^{(n+2)/2}}(e^{-t}x -y) e^{-\frac{|e^{-t}x - y|^2}{1-e^{-2t}}}\, \dd t \nonumber \\&= -\frac{2}{\pi^{(n+1)/2}} e^{|x|^2-|y|^2} \int_0^1 \frac{(-\log r)^{-1/2}}{(1-r^2)^{(n+2)/2}} (rx-y) e^{-\frac{|x-ry|^2}{1-r^2}}\, \dd r 
\label{KernelRieszOU2}
\end{align}
again by the change of variables $t=-\log r$. Both the kernels in~\eqref{nucleoDL12OU} and~\eqref{KernelRieszOU2} are with respect to the Lebesgue measure.

\smallskip

All throughout the paper, we shall use the letters $c$ and $C$ to denote constants, not necessarily equal at different occurrences. For any quantity $A$ and $B$, we write $A\lesssim B$ by meaning that there exists a constant $c>0$ such that $A\leq c \,B$. If $A\lesssim B$ and $B\lesssim A$, we write $A\approx B$ .

\section{The Hardy space $X^1(\gamma)$}\label{Sec:X1:OU}
In a recent series of papers Mauceri, Meda and Vallarino~\cite{MMVHardy,MMVAtomic, MMV, MMV-Harmonic} developed a theory of Hardy-type spaces on certain noncompact manifolds of infinite volume, to obtain endpoint estimates for imaginary powers and Riesz transforms associated with the Laplace-Beltrami operator of the manifold. Though in a rather different context, we shall adapt their Hardy spaces to the Gaussian setting (thus of \emph{finite} volume) and to the Ornstein-Uhlenbeck operator.

The atoms we shall use are classical atoms supported in (dilations of) ``hyperbolic'' balls, which will be called \emph{admissible}. We inherit such atoms and terminology from~\cite{MM}. When talking about balls, we always mean \emph{Euclidean} balls. If $B$ is a ball, $c_B$ will stand for its center and $r_B$ for its radius. For every positive integer $k$ and ball $B$, we shall write $kB$ to denote the ball with same center $c_B$ and radius $k\, r_B$.
\begin{definition}\label{admissible-ballOU}
We call \emph{admissible ball} a ball $B$ of center $c_B$ and radius $r_B \leq \min (1,1/|c_B|)$. The family of all admissible balls will be denoted by $\mathcal{B}_1$.
\end{definition}

\begin{definition}\label{quasi-harmonic-lambda}
Let $\Omega$ be a bounded open set and $K$ be a compact set.
\begin{itemize}
\item We denote by $q^2(\Omega)$ the space of all functions $u\in L^2(\Omega)$ such that $\Ls u$ is constant on $\Omega$, and by $q^2(K)$ the space of functions on $K$ which are the restriction to $K$ of a function in $q^2(\Omega')$ for some bounded open $\Omega'\supset K$; 
\item we denote by $h^2(\Omega)$ the space of all functions $u\in L^2(\Omega)$ such that $\Ls u=0$ on $\Omega$, and by $h^2(K)$ the space of functions on $K$ which are the restriction to $K$ of a function in $h^2(\Omega')$ for some bounded open $\Omega'\supset K$.
\end{itemize}
The spaces $h^2(\Omega)^\perp$ and $q^2(\Omega)^\perp$ are the orthogonal complements of $h^2(\Omega)$ and $q^{2}(\Omega)$ in $L^2(\Omega,\gamma)$, respectively. The spaces $h^2(K)^\perp$ and $q^2(K)^\perp$ are the orthogonal complements in $L^2(K,\gamma)$.
\end{definition}

We now introduce the atomic Gaussian Hardy space $X^1(\gamma)$. The reader should compare our definitions with those of~\cite{MMVAtomic}.

\begin{definition}\label{atom-X1}
An \emph{$X^1$-atom} is a function $a\in L^2(\gamma )$, supported in a ball $B\in \mathcal{B}_1$, such that
\begin{itemize}
\item[\rmi] $\|a\|_{L^2(\gamma )} \leq \gamma (B)^{-1/2}$,
\item[\rmii] $a\in q^2(\bar{B})^\perp$.
\end{itemize}
\end{definition}
\begin{definition}\label{def:X1mu}
The Hardy space $X^{1}(\gamma )$ is the space
\begin{equation*}
X^{1}(\gamma )\coloneqq\big\{ f \in L^1(\gamma ) \colon \mbox{$f= \sum_j \mu_j a_j$}, \mbox{ $a_j$ $X^1$-atom }, (\mu_j) \in \ell^1\big\}
\end{equation*}
endowed with the norm
\[
\|f\|_{X^1(\gamma)}\coloneqq \inf\; \{\| (\mu_j)\|_{\ell^1}\colon\mbox{$f= \sum_j \mu_j a_j$}, \mbox{ $a_j$ $X^1$-atom} \}.
\]
\end{definition}
If $B\in \mathcal{B}_1$, the functions in $q^2(\bar{B})$ will be referred to as (Gaussian) \emph{quasi-harmonic functions} on $B$.

Observe that the space $X^1(\gamma)$ is strictly contained in the Hardy space $H^1(\gamma)$ introduced by Mauceri and Meda~\cite{MM}. Indeed, the atoms defining $H^1(\gamma)$ are supported on admissible balls and satisfy property \rmi~of Definition~\ref{atom-X1}, but have only \emph{zero integral}, a much weaker condition than \rmii~of the same definition. In this sense, the space $X^1(\gamma)$ may be inserted in the framework of the theory developed by Mauceri and Meda~\cite{MM} for the Gauss measure or more generally by Carbonaro and the same authors~\cite{CMMfinita} in the setting of metric measure spaces. However, it is worth mentioning that our understanding of the space $X^1(\gamma)$ is still far from being complete and it will be the object of further investigations.

Our proof of Theorem~\ref{teobound-RieszX1} follows the same order of ideas of~\cite[Theorem 5.3]{MMV}.

\subsection{Support preservation on atoms} A key point of the proof of~\cite[Theorem 5.3]{MMV} is that the inverse of the Laplace-Beltrami operator preserves the support of atoms. In the following proposition, we prove that $\Ls^{-1}$ (suitably defined, recall~\eqref{Lz}) shares the same behaviour on $X^1$-atoms. Its proof will occupy the remainder of this subsection.
\begin{proposition}\label{support:presOU}
For every $X^1$-atom $a$ supported in an admissible ball $B$, $\supp \Ls^{{-1}} a \subseteq \bar{B}$ and
\[\|\Ls^{-1} a\|_{L^2(\gamma )}\leq r_B^2\, \gamma (B)^{-1/2}. \]
\end{proposition}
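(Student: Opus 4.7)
The proof naturally splits into support preservation and an $L^2$ bound, and both parts hinge on the orthogonality condition $a\in q^2(\bar B)^\perp$ that strengthens the zero-integral condition of Mauceri--Meda's $H^1(\gamma)$.

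\textbf{Support preservation via duality.} To show $\supp \Ls^{-1}a\subseteq \bar B$, I would pick an arbitrary $\varphi\in C_c^\infty(\R^n\setminus \bar B)$ and compute, using that $\Ls^{-1}$ is bounded and self-adjoint on $L^2(\gamma)$,
\[
\langle \Ls^{-1}a,\varphi\rangle_{L^2(\gamma)} = \langle a,\Ls^{-1}\varphi\rangle_{L^2(\gamma)}.
\]
Set $w=\Ls^{-1}\varphi$. From the spectral expansion \eqref{Lz}, $w\in \Dom(\Ls)$ with $\Ls w=\Pi_0\varphi = \varphi-\int\varphi\,\dd\gamma$. On the open neighbourhood $\Omega':=\R^n\setminus\supp\varphi$ of $\bar B$, one therefore has $\Ls w\equiv -\int\varphi\,\dd\gamma$, a constant. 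Because $\Ls$ is a second-order linear elliptic operator with smooth coefficients and this right-hand side is smooth on $\Omega'$, elliptic regularity lets us choose a smooth representative of $w$ on $\Omega'$; its restriction to $\bar B$ then lies in $q^2(\bar B)$. The atomic orthogonality \rmii{} of Definition~\ref{atom-X1} thus gives $\langle a,w\rangle=0$. As $\varphi$ is arbitrary, $\Ls^{-1}a$ vanishes on $\R^n\setminus \bar B$.

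\textbf{$L^2$-bound via Poincaré.} Writing $u=\Ls^{-1}a$, I would first note that constants belong to $q^2(\bar B)$, so condition \rmii{} forces $\int a\,\dd\gamma=0$, i.e.\ $a\in L^2_0(\gamma)$. Then the spectral definition of $\Ls^{-1}$ yields $u\in\Dom(\Ls)\subset\Dom(\Ls^{1/2})$ with $\Ls u=a$, and the identity $\|\Ls^{1/2}u\|_2^2=\tfrac12\|\nabla u\|_2^2$ places $u$ in $H^1(\R^n,\gamma)$. Testing $\Ls u=a$ against $u$ (integrating by parts as in Section~\ref{intKernels}) gives
\[
\tfrac12\|\nabla u\|_{L^2(\gamma)}^2 = \langle a,u\rangle \le \|a\|_{L^2(\gamma)}\|u\|_{L^2(\gamma)}.
\]
By the first part, $u$ is supported in $\bar B$, and since $u\in H^1(\R^n,\gamma)$ its trace on $\partial B$ must match the outside value $0$; hence $u|_B\in H^1_0(B,\gamma)$. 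The admissibility $r_B\le\min(1,1/|c_B|)$ implies $\bigl||y|^2-|c_B|^2\bigr|\le r_B^2+2r_B|c_B|\le 3$ for $y\in B$, so the Gaussian weight oscillates on $B$ by at most an absolute multiplicative constant, and the standard Euclidean Poincaré inequality transfers to give $\|u\|_{L^2(\gamma)}\lesssim r_B\|\nabla u\|_{L^2(\gamma)}$. Combining the two inequalities yields $\|u\|_{L^2(\gamma)}\lesssim r_B^2\|a\|_{L^2(\gamma)}\le r_B^2\gamma(B)^{-1/2}$.

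\textbf{Main obstacle.} The delicate step is the support preservation. One must take $\Ls^{-1}\varphi$, a priori only an $L^2$ object built from the spectral theorem, and identify a pointwise smooth representative on a neighbourhood of $\bar B$ that satisfies the required constant-Laplacian equation; only then does it land in $q^2(\bar B)$ and activate the atomic orthogonality. The norm bound, by contrast, is a routine Dirichlet-form and Poincaré calculation once support preservation is in hand.
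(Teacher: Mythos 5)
Your proof is correct but takes a genuinely different and more direct route than the paper's. For support preservation, the paper develops functional-analytic machinery (Lemmata~\ref{prop4OU} and~\ref{dir-rangoOU}) to show $q^2(\bar B)^\perp\subset h^2(B)^\perp=\Ran(\Ls_B)$, produces $f\in\Dom(\Ls_B)$ with $\Ls f=a$, and then identifies $\Ls^{-1}a$ with $f$ --- a step that secretly uses the full strength of $a\perp q^2(\bar B)$ (not merely $a\perp h^2(B)$) to kill an additive constant, since a priori $\Ls^{-1}\Ls f=\Pi_0 f=f-\int f\,\dd\gamma$. Your duality computation $\langle\Ls^{-1}a,\varphi\rangle=\langle a,\Ls^{-1}\varphi\rangle$ with $\varphi\in C_c^\infty(\R^n\setminus\bar B)$ bypasses all of this and makes transparent why the $q^2(\bar B)^\perp$ condition, rather than mere zero integral, is the right atomic hypothesis; you should only note that $\R^n\setminus\supp\varphi$ is unbounded, so one must restrict $\Ls^{-1}\varphi$ to a \emph{bounded} open set containing $\bar B$ before invoking Definition~\ref{quasi-harmonic-lambda}, and that the elliptic-regularity remark is superfluous since $q^2(\Omega')$ is defined by a distributional constraint. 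For the norm bound, the paper passes through $\Ls_{B,\Dir}$ and a Faber--Krahn estimate $\lambda_\Dir^\gamma(B)\gtrsim r_B^{-2}$, whereas you test $\Ls u=a$ against $u$ and apply a Gaussian Poincar\'e inequality on $B$; these two estimates are essentially equivalent, and both (including the paper's own proof) yield the bound only up to an absolute constant, so the inequality in the statement should really carry a $\lesssim$. Overall your argument is shorter and more self-contained; the paper's route has the side benefit of establishing reusable structural facts about $\Ls_B$, $h^2$ and $q^2$.
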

For every ball $B$, in the same spirit of~\cite{MMV}, we introduce two operators $\Ls _B$ and $\Ls_{B, \Dir}$, defined as the restriction of $\Ls$ (in the distributional sense) to 
\[\Dom(\Ls _B)\coloneqq \{ f\in \Dom(\Ls) \colon \, \supp f\subseteq \bar{B}\},\]
\[\Dom(\Ls _{B,\Dir})\coloneqq \{ f\in W^{1,2}_0(B,\gamma )\colon \, \Ls f \in L^2(B,\gamma )\}\]
respectively. Here $W^{1,2}_0(B,\gamma )$ denotes the closure of $C_c^\infty(B)$ with respect to the graph norm of the gradient $\nabla$ on $L^2(B,\gamma)$. We shall also use the space $W^{2,2}_0(B,\gamma )$ which is the closure of $C_c^\infty(B)$ with respect to the graph norm of $\Ls$. Notice that, since $\gamma $ and $\gamma^{-1}$ are bounded on any compact set, $W^{2,2}_0(B,\gamma )= W^{2,2}_0(B)$ as vector spaces, with equivalent norms.

It is well known (cf.~\cite[Theorem 10.13]{Grigor'yan}) that $\Ls_{B,\Dir}$ has purely discrete spectrum. We denote by $\lambda_\Dir^{\gamma}(B)$ its first eigenvalue.

We begin by proving the following proposition. Its proof is essentially the same as~\cite[Proposition 3.5]{MMV-Harmonic}, but avoids to use the existence of \emph{global} quasi-harmonic functions. We include all the details for the ease of the reader. 
\begin{lemma}\label{prop4OU}
Let $B$ be a ball. Then
\begin{itemize}
\item[\emph{(1)}] both $h^2(B)$ and $q^2(B)$ are closed in $L^2(B)$; 
\item[\emph{(2)}] $\Ls$ is a Banach space isomorphism between $\Dom(\Ls_B)$ and $h^2(\bar B)^\perp$;
\item[\emph{(3)}] $h^2(\bar B)^\perp = h^2(B)^\perp$;
\item[\emph{(4)}] $q^2(\bar{B})^\perp= q^2(B)^\perp$.
\end{itemize}
\end{lemma}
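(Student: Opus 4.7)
The strategy follows MMV-Harmonic Prop.~3.5, with the difference that we replace global quasi-harmonic functions by quasi-harmonic functions defined only on open neighbourhoods of $\bar B$, which is sufficient for our purposes.

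\emph{Part (1).} Both subspaces are defined by conditions that survive distributional limits. If $u_n \to u$ in $L^2(B)$ with $\Ls u_n = c_n \in \C$ (the case $c_n=0$ giving $h^2(B)$), pairing $\Ls u_n$ against any $\phi \in C_c^\infty(B)$ with $\int\phi\neq 0$ forces $c_n$ to converge to some $c$, and $\Ls u = c$ in $\mathcal D'(B)$. Closedness of both $h^2(B)$ and $q^2(B)$ follows.

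\emph{Part (2).} If $f\in\Dom(\Ls_B)$ and $\Ls f=0$, then $f$ is a global $L^2(\gamma)$ solution of $\Ls f=0$, hence a constant; being compactly supported in $\bar B \subsetneq \R^n$, it vanishes. For the range inclusion $\Ran(\Ls_B)\subseteq h^2(\bar B)^\perp$, let $f\in\Dom(\Ls_B)$ and let $h\in h^2(\bar B)$ extend to $\tilde h\in h^2(\Omega')$ on some $\Omega'\supset\bar B$. Picking a cutoff $\chi\in C_c^\infty(\Omega')$ equal to $1$ on a neighbourhood of $\bar B$, one has $\chi\tilde h\in C_c^\infty(\R^n)\subseteq \Dom(\Ls)$; by self-adjointness and since $\Ls(\chi\tilde h)=\Ls\tilde h=0$ on $\supp f$,
\begin{equation*}
\int_B(\Ls f)h\,\dd\gamma = \int_{\R^n}(\Ls f)(\chi\tilde h)\,\dd\gamma = \int_{\R^n}f\,\Ls(\chi\tilde h)\,\dd\gamma = 0.
\end{equation*}
For surjectivity, $1\in h^2(\bar B)$ forces $\int g\,\dd\gamma = 0$ for any $g\in h^2(\bar B)^\perp$. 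Set $u = \Ls_{B,\Dir}^{-1}g$; this is well defined since $\lambda^\gamma_\Dir(B)>0$ and elliptic regularity gives $u\in W^{2,2}(B)\cap W^{1,2}_0(B,\gamma)$. To show the zero-extension $\tilde u$ lies in $\Dom(\Ls_B)=W^{2,2}_0(B,\gamma)$, one must kill the surface-layer contribution to $\Ls\tilde u$ at $\partial B$, i.e.\ prove $\partial_\nu u|_{\partial B}=0$. For any $v\in h^2(\bar B)$, Green's identity and $u|_{\partial B}=0$ yield
\begin{equation*}
0 = \int_B gv\,\dd\gamma = \int_B(\Ls u)v\,\dd\gamma = -\tfrac12\int_{\partial B} v\,\partial_\nu u\,\dd\sigma_\gamma.
\end{equation*}
Density of boundary traces of $h^2(\bar B)$ in $L^2(\partial B,\dd\sigma_\gamma)$---obtained by solving Dirichlet problems for $\Ls$ on open neighbourhoods of $\bar B$---then forces $\partial_\nu u\equiv 0$. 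Continuity of the inverse follows from Poincaré's inequality on $W^{1,2}_0(B,\gamma)$ combined with the Dirichlet identity $\tfrac12\|\nabla f\|^2_{L^2(\gamma)}=\langle f,\Ls f\rangle$.

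\emph{Parts (3) and (4).} The inclusions $h^2(\bar B)\subseteq h^2(B)$ and $q^2(\bar B)\subseteq q^2(B)$ give one direction. For the reverse in (3), take $g\in h^2(\bar B)^\perp$; by (2), $g=\Ls f$ for some $f\in W^{2,2}_0(B,\gamma)$, and for any $u\in h^2(B)$ integration by parts has no boundary terms since $f$ and $\nabla f$ both vanish on $\partial B$, giving $\int_B ug\,\dd\gamma = \int_B(\Ls u)f\,\dd\gamma = 0$. For (4), $q^2(\bar B)^\perp\subseteq h^2(\bar B)^\perp$ so again $g=\Ls f$. Pick a \emph{local} quasi-harmonic $w$ with $\Ls w=1$ on a neighbourhood of $\bar B$---existence follows by solving the Dirichlet problem for $\Ls w=1$ on a slightly larger ball containing $\bar B$. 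Then $w\in q^2(\bar B)$, so $0=\int_B gw\,\dd\gamma=\int_B f\,\Ls w\,\dd\gamma=\int_B f\,\dd\gamma$, and hence for any $u\in q^2(B)$ with $\Ls u=c$, $\int_B ug\,\dd\gamma = \int_B f\,\Ls u\,\dd\gamma = c\int_B f\,\dd\gamma = 0$. The principal obstacle is the density of boundary traces of $h^2(\bar B)$ in $L^2(\partial B,\dd\sigma_\gamma)$, a step which replaces the appeal to global quasi-harmonic functions in MMV-Harmonic.
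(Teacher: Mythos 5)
The central gap is in part (2), surjectivity. You replace the paper's global argument with $u=\Ls_{B,\Dir}^{-1}g$, elliptic regularity up to the boundary, and a proof that $\partial_\nu u=0$ on $\partial B$ via Green's identity plus density of traces of $h^2(\bar B)$ in $L^2(\partial B,\dd\sigma_\gamma)$. That density statement is the crux, and you neither prove it nor cite a source; it is not a one-line fact. ``Solving Dirichlet problems on open neighbourhoods of $\bar B$'' produces a family of smooth boundary traces, but the map from data on $\partial B'$ to trace on $\partial B$ is smoothing, so density in $L^2(\partial B)$ requires a separate argument (e.g.\ a layer-potential/jump-relation or a Lax--Malgrange-type theorem for $\Ls$). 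This is strictly more machinery than the paper invokes. The paper's surjectivity proof avoids the boundary entirely: given $v\in h^2(\bar B)^\perp$, set $\tilde v$ to be the zero-extension and $f=\Ls^{-1}\tilde v$ (the global inverse on $L^2_0(\gamma)$, well defined because $1\in h^2(\bar B)$ forces $\int\tilde v\,\dd\gamma=0$); then testing $f$ against $\phi\in C_c^\infty(\bar B^c)\cap L^2_0(\gamma)$ and observing that $\Ls^{-1}\phi\in h^2(\bar B)$ shows $f$ is constant on $\bar B^c$, after which one subtracts the constant. No trace theorem and no elliptic boundary regularity are needed. If you want to keep the Dirichlet route, you must actually prove the trace-density lemma; as written, the step is missing.

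The remaining parts are essentially sound but under-justified. In (1) your argument is a mild variant of the paper's (which instead decomposes $q^2(B)=h^2(B)\oplus\C(\Ls^{-1}\psi_B)|_B$ to reduce to $h^2$); both work. In (3) and (4) you write $\int_B u\,\Ls f\,\dd\gamma = \int_B (\Ls u)f\,\dd\gamma$ ``with no boundary terms,'' but $u\in h^2(B)$ or $q^2(B)$ is only an $L^2$ distributional solution and need not have any boundary regularity, so integration by parts is not directly available. The correct mechanism, which the paper spells out, is to use $\Dom(\Ls_B)=W^{2,2}_0(B,\gamma)$ and a sequence $\phi_k\in C_c^\infty(B)$ converging to $f$ in the graph norm, so that the pairing $\lim_k(\phi_k,\Ls\tilde g)$ is the distributional definition of $\Ls g=0$ in $B$; you should phrase your step that way. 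Finally, your construction of a local quasi-harmonic $w$ in (4) (solving $\Ls w=1$ near $\bar B$) is a legitimate and slightly more self-contained alternative to the paper's $q=\Ls^{-1}g$ with $g\in L^2_0(\gamma)$ equal to the constant near $\bar B$; either works once $h^2(B)=\overline{h^2(\bar B)}$ is in hand.
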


\begin{proof}
In the whole proof, $B$ will be a fixed ball. 

(1) Let $\psi_B \in C_c^\infty\cap L^2_0(\gamma)$ be such that $\psi_B|_B \equiv 1_B$. Then
\[q^2(B) = h^2(B) \oplus \C (\Ls^{-1} \psi_B)|_B\]
and thus it is enough to prove that $h^2(B)$ is closed, since it is a subspace of $q^2(B)$ of codimension one. Now let $(v_k)$ be a sequence in $h^2(B)$ converging to $v$ in $L^2(B)$. Then $\Ls v_k$ converges to $\Ls v$ in the sense of distributions in $B$, thus $\Ls v =0$ in $B$ and hence $v\in h^2(B)$.

(2) We first show that $\Ls (\Dom(\Ls_B)) \subseteq h^2(\bar{B})^\perp$. Let then $f\in \Dom(\Ls_B)$, and $v\in h^2(\bar{B})$. Let $\tilde v$ be a smooth function with compact support which is harmonic in an open neighbourhood of $\bar{B}$ and satisfies $\tilde v|_{\bar{B}} =v$. Then
\[\int_{\bar{B}} v\Ls f\, \dd \gamma = \int_{\R^n} \tilde v \Ls f\, \dd \gamma = \int_{\R^n} \Ls \tilde v f\, \dd \gamma =0\]
since $\supp f\subseteq \bar{B}$ and $\Ls \tilde v$ is zero in a neighbourhood of $\bar{B}$.

$\Ls$ is injective on $\Dom(\Ls_B)$, since if $f\in \Dom(\Ls_B)$ and $\Ls f=0$, then $f$ is constant and has compact support; thus, $f=0$.

We now prove that $\Ls$ maps $\Dom(\Ls_B)$ onto $h^2(\bar{B})^\perp$. In order to do this, let $v\in h^2(\bar{B})^\perp$ and let $\tilde v$ be the extension of $v$ to a null function outside $\bar{B}$. Let $f = \Ls^{-1} \tilde v$.

By definition, $f\in \Dom(\Ls)$. Since the constant function $1$ is in $h^2(\bar{B})$, moreover, $\int \tilde v\, \dd \gamma =\int v \, \dd \gamma =0$; thus $\tilde v \in L^2_0(\gamma)$ and by~\eqref{eqdagger}, $\Ls f = \tilde v$. 

Let now $\phi \in C_c^\infty(\bar{B}^c)\cap L^2_0(\gamma)$. Then, by~\eqref{eqdagger}, $\Ls \Ls^{-1} \phi = \phi$ which is identically zero on a neighbourhood of $\bar{B}$. Therefore, $\Ls^{-1}\phi \in h^2(\bar{B})$ and hence
\begin{align*}
(\phi,f)_{L^2(\gamma )}&=(\Ls \Ls^{-1} \phi,f)_{L^2(\gamma )}\\&=(\Ls ^{-1} \phi,\Ls f)_{L^2(\gamma )} =\int_B (\Ls^{-1} \phi)v\dd \gamma=0.
\end{align*}
This implies that there exists a constant $c$ such that $f=c$ on $\bar{B}^c$. Thus, $g\coloneqq f-c$ is such that $g\in \Dom(\Ls)$, $\supp g \subseteq \bar{B}$ and $\Ls g= \Ls f= \tilde v$.

We have proved that $\Ls$ is a bijection between $\Dom(\Ls_B)$ and $h^2(\bar B)^\perp$. Since $\Ls $ is continuous from $\Dom(\Ls_B)$ to $h^2(\bar B)^\perp$, its inverse is also continuous by the closed graph theorem, and $\Ls$ is then a Banach space isomorphism.

(3) We use a simple adaptation of~\cite[Theorem 3.4, $(ii)\Rightarrow (i)$]{MMV-Harmonic}. The obvious inclusion $h^2(\bar B) \subseteq h^2(B)$ leads to 
\[h^2(B)^\perp \subseteq h^2(\bar B)^\perp.\]
As for the converse inclusion, observe first that, if $W^{2,2}(\R^n)_{\bar{B}}$ denotes the functions in $W^{2,2}(\R^n)$ with support in $\bar{B}$, then $W^{2,2}(\R^n)_{\bar{B}} = W^{2,2}_0(B)$ by~\cite[Chapter 5.5, Theorem 2]{Evans} (or~\cite[Theorem 5.29]{AdamsFournier}). Moreover, since $B$ is bounded and has finite measure, $\Dom(\Ls_B)= \Dom(\Delta_B)$. Therefore
\begin{equation}\label{eqdomainsOU}
\Dom(\Ls_B)= \Dom(\Delta_B)= W^{2,2}(\R^n)_{\bar B}= W^{2,2}_0(B)= W^{2,2}_0(B, \gamma ).
\end{equation}
Let now $v\in h^2(\bar B)^\perp$ and $\tilde v$ be the extension of $v$ which vanishes on $\bar B^c$. By (2) there exists $f\in \Dom(\Ls_B)$ such that $\Ls f = \tilde v$, and by~\eqref{eqdomainsOU} there exists a sequence $(\phi_k) \subset C_c^\infty(B)$ converging to $f$ in the graph norm of $\Ls $. Thus, if $g\in h^2(B)$ and $\tilde g$ is its trivial extension to $\R^n$,
\begin{align*}
\int_{B}vg\, \dd\gamma& =(\tilde v,\tilde g)= (\Ls f, \tilde g)= \lim_k(\Ls \phi_k, \tilde g)= \lim_k(\phi_k, \Ls \tilde g)
\end{align*}
which vanishes because $\Ls \tilde g=0$ on $B$. Thus, $v\in h^2(B)^\perp$.

(4) We prove that, for every ball $B$, $\overline{q^2(\bar{B})}=q^2(B)$. The inclusion $\subseteq $ follows easily, since the obvious inclusion $q^2(\bar{B})\subseteq q^2(B)$ leads to
\[\overline{q^2(\bar{B})}\subseteq \overline{q^2(B)}=q^2(B),\]
the last equality being true by (1).

To prove the converse inclusion $\supseteq$, let $v\in q^2(B)$ so that $\Ls v=c$ on $B$ for some constant $c$. Let $g\in L^2_0(\gamma)$ be such that $g=c$ on a neighbourhood of $\bar B$. Let $q= \Ls^{-1} g$, so that $\Ls q=g$ by~\eqref{eqdagger}, $q\in q^2(\bar B)$ and $v-q \in h^2(B)$. By (3) and (1)
\begin{equation}\label{hehchiuso}
\overline{h^2(\bar{B})}= \overline{h^2(B)}=h^2(B),
\end{equation}
and thus there exists a sequence $(h_k) \subseteq h^2(\bar{B})$ such that $h_k \to v-q$ in $L^2(B)$, and then $h_k +q \to v$ in $L^2(B)$. Therefore $v\in \overline{q^2(\bar{B})}$.
\end{proof}

\begin{lemma}\label{dir-rangoOU}
Let $B$ be a ball. Then
\begin{itemize}
\item[\emph{(1)}] $\Ls _B \subset \Ls _{B,\Dir}$;
\item[\emph{(2)}] $\Ran(\Ls _B)= h^2(B)^\perp $.
\end{itemize}
\end{lemma}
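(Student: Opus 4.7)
The plan is to derive both statements essentially as bookkeeping consequences of Lemma~\ref{prop4OU}, and in particular of the identification $\Dom(\Ls_B)=W^{2,2}_0(B,\gamma)$ proved in~\eqref{eqdomainsOU}. Statement~(2) is in fact immediate: by Lemma~\ref{prop4OU}(2), $\Ls$ is a Banach space isomorphism between $\Dom(\Ls_B)$ and $h^2(\bar B)^\perp$, whence $\Ran(\Ls_B)=h^2(\bar B)^\perp$, and Lemma~\ref{prop4OU}(3) identifies $h^2(\bar B)^\perp$ with $h^2(B)^\perp$. Combining the two gives (2).

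The nontrivial content is therefore the inclusion of operators (1), which amounts to showing $\Dom(\Ls_B)\subseteq \Dom(\Ls_{B,\Dir})$. I would take $f\in\Dom(\Ls_B)$ and, using~\eqref{eqdomainsOU}, choose a sequence $(\phi_k)\subset C_c^\infty(B)$ such that $\phi_k\to f$ and $\Ls \phi_k\to \Ls f$ in $L^2(\gamma)$. A standard Gaussian integration by parts on $C_c^\infty$ gives the identity
\[
\tfrac{1}{2}\|\nabla\phi\|_{L^2(\gamma)}^2=(\Ls\phi,\phi)_{L^2(\gamma)},
\]
so, applied to $\phi_k-\phi_j$ together with Cauchy--Schwarz, it shows that $(\phi_k)$ is Cauchy also in the graph norm of $\nabla$ on $L^2(B,\gamma)$. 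Consequently $f\in W^{1,2}_0(B,\gamma)$. Since $\Ls f\in L^2(\gamma)$ restricts trivially to $L^2(B,\gamma)$, we obtain $f\in\Dom(\Ls_{B,\Dir})$ with $\Ls_{B,\Dir}f=\Ls f=\Ls_Bf$, which is the desired operator inclusion.

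I do not expect any real obstacle: all the genuine analytic work has already been absorbed in the proof of Lemma~\ref{prop4OU}, and the only new ingredient is the essentially trivial remark that the $\Ls$-graph norm dominates the $\nabla$-graph norm on $C_c^\infty(B)$ via the above integration by parts. The only point requiring some care is keeping track of which ambient domain ($B$, $\bar B$, or $\R^n$) each Sobolev or function space refers to, but this is purely notational.
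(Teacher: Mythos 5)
Your proof is correct, and it takes a genuinely more streamlined route than the paper's, at least for part~(2). There you simply read off $\Ran(\Ls_B)=h^2(\bar B)^\perp=h^2(B)^\perp$ from Lemma~\ref{prop4OU}(2) (which explicitly asserts surjectivity of $\Ls$ onto $h^2(\bar B)^\perp$) and Lemma~\ref{prop4OU}(3). The paper instead re-derives~(2) from scratch: it proves $\supseteq$ by showing $\Ran(\Ls_B)^\perp\subseteq h^2(B)$ via a distributional duality argument (using that $\Ran(\Ls_B)$ is closed, thanks to the spectral gap), and proves $\subseteq$ by an extension/integration-by-parts argument and~\eqref{hehchiuso}. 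Your citation of the already-established isomorphism is shorter and equally rigorous; what the paper's direct proof buys is only a certain self-containedness within the lemma.

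For part~(1), the end result and the key tool, the Gaussian identity $\tfrac{1}{2}\|\nabla\phi\|_{L^2(\gamma)}^2=(\Ls\phi,\phi)_{L^2(\gamma)}$, are the same, but the paths differ. The paper applies the resulting estimate $\|\nabla f\|_{L^2(\gamma)}^2\le\|f\|_{L^2(\gamma)}\|\Ls f\|_{L^2(\gamma)}$ directly to $f\in\Dom(\Ls_B)$ to conclude $f\in W^{1,2}(B)$, and then invokes the classical trace theorem (Evans, Ch.~5.5, Thm.~2), using that $f$ vanishes on $\bar B^c$ and that $\partial B$ is smooth, to upgrade this to $f\in W^{1,2}_0(B)$. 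You instead pass through the identification $\Dom(\Ls_B)=W^{2,2}_0(B,\gamma)$ from~\eqref{eqdomainsOU}, pick an approximating sequence in $C_c^\infty(B)$, and use the coercivity estimate on differences to show the sequence is Cauchy in the $W^{1,2}$ graph norm, obtaining $f\in W^{1,2}_0(B,\gamma)$ by construction. This is logically sound and arguably cleaner, though note it is not actually trace-free: the derivation of~\eqref{eqdomainsOU} itself rests on the same Evans theorem, so the trace/extension input is merely relocated, not removed. Both routes are valid; yours has the minor advantage of making the $W^{1,2}_0$ membership a consequence of approximation rather than of boundary-value theory.
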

\begin{proof}
We adopt the same strategy of~\cite[Proposition 3.1 (i)]{MMV}.

(1) Let $f\in \Dom(\Ls_B)$. Then
\[\|\nabla f\|_{L^2(\gamma )} \leq \|f\|_{L^2(\gamma )} \|\Ls f\|_{L^2(\gamma )} <\infty\]
and since $\supp f\subseteq \bar{B}$, $f\in W^{1,2}(B)$. Since $f=0$ on the complement of $\bar{B}$ and the boundary of $B$ is smooth, the trace of $f$ on the boundary of $B$ is zero. Thus $f\in W^{1,2}_0(B)$ by a classical result (see e.g.~\cite[Chapter 5.5, Theorem 2]{Evans}). Thus $\Dom(\Ls_B) \subset \Dom(\Ls_{B,\Dir})$.

(2) First of all, $\Ran(\Ls _B)$ is closed in $L^2(B)$, since it is closed in $L^2(\gamma )$, because $\Ls$ has spectral gap and is closed. Thus, to prove the inclusion $\supseteq$ of (2) it suffices to show that $\Ran(\Ls _B)^\perp \subseteq h^2(B)$. 

Let $g\in \Ran(\Ls _B)^\perp$. Then
\[0= \int_B (\Ls \psi )g \, \dd\gamma = \langle \gamma \psi, \Ls g\rangle \qquad \forall \psi \in C_c^\infty(B) \]
in the sense of distributions on $B$. Hence $\Ls g= 0$ on $B$, namely $g\in h^2(B)$.

We finally prove the inclusion $\subseteq$. Since $h^2(B)=\overline{h^2(\bar{B})}$ by Lemma~\ref{prop4OU}, (3), it is enough to prove that $\Ran(\Ls _B)$ is orthogonal to $h^2(\bar{B})$. Let then $f\in \Dom(\Ls _B)$, $g\in h^2(\bar{B})$ and let $\tilde g$ be any extension of $g$ to all $\R^n$, such that $\tilde g\in \Dom(\Ls)$. Thus, since $\supp (\Ls f)\subseteq \bar{B}$
\[(\Ls _B f, g)_{L^2(B,\gamma )}= (\Ls f, \tilde g)_{L^2(\gamma )} = (f, \Ls \tilde g)_{L^2(\gamma )}=0\]
because $\supp f \subseteq \bar{B}$ and $\Ls \tilde g$ vanishes on a neighbourhood of $\bar{B}$.
\end{proof}

\begin{proof}[Proof of Proposition~\ref{support:presOU}]
Let $a$ be an $X^1$-atom. By Lemmata~\ref{prop4OU}, (4) and~\ref{dir-rangoOU}, (2) we get
\[a\in q^2(\bar{B})^\perp = q^2(B)^\perp \subset h^2(B)^\perp = \Ran(\Ls _B).\]
Therefore, there exists $f\in \Dom(\Ls _B)$ such that $\Ls_{B,\Dir} f=\Ls _B f = a$, the first equality by Lemma~\ref{dir-rangoOU}, (1). Thus $\supp \,(\Ls_{B,\Dir}^{-1} a)= \supp f \subseteq \bar{B}$. Moreover, $\Ls ^{-1} a = \Ls_{B,\Dir}^{-1}a$. Thus
\begin{equation}\label{LB-1aOU}
\|\Ls^{-1} a\|_2 = \|\Ls^{-1}_{B,\Dir}a\|_2 \leq \frac{1}{\lambda_{\Dir}^\gamma(B)} \|a\|_2\leq \frac{\gamma (B)^{-1/2}}{\lambda_{\Dir}^\gamma(B)}. 
\end{equation}
It then remains to estimate $\lambda_\Dir^\gamma(B)$. Recall that on $\R^n$ we have the usual Faber-Krahn inequality for the Laplacian 
\begin{equation}\label{FKclassicOU}
\lambda_1(B) \geq C |B|^{-2/n}
\end{equation}
where $\lambda_1(B)$ is the first eigenvalue of the Dirichlet Laplacian (see e.g.~\cite[(14.5)]{Grigor'yan}). Then, by the minmax principle~\cite[Theorem 10.18]{Grigor'yan} and the equivalence of the Lebesgue measure and $\gamma$ on $B$
\[\lambda_\Dir^{\gamma}(B) = \inf_{\phi \in C_c^\infty(B) \setminus \{0\}} \frac{\int_B |\nabla \phi|^2(x) \gamma (x)\, \dd x}{\int_B |\phi|^2(x) \gamma (x)\, \dd x} \geq c \inf_{\phi \in C_c^\infty(B)\setminus \{0\}}\frac{\int_B |\nabla \phi|^2(x)\, \dd x}{\int_B |\phi|^2(x)\, \dd x}=c \lambda_1(B) \]
for some $c>0$, independent of $B\in \mathcal{B}_1$. Then, by~\eqref{FKclassicOU}
\begin{equation*}
\lambda_\Dir^{\gamma}(B) \geq c \lambda_1(B) \geq C |B|^{-2/n}\geq c r_B^{-2}
\end{equation*}
since $|B|\approx r_B^n$. This together with~\eqref{LB-1aOU} completes the proof.
\end{proof}

\subsection{Proof of Theorem~\ref{teobound-RieszX1}}\label{Sec:RieszX1}
\begin{lemma}\label{Lemma4BOU}
For every ball $B\in \mathcal{B}_1$ and every $f\in L^1(\gamma)$ with $\supp f\subseteq \bar{B}$,
\[\|\nabla \Ls^{1/2} f\|_{L^1((4B)^c, \gamma)}\lesssim r_B^{-2}\|f\|_{L^1(B,\gamma)}.\]
\end{lemma}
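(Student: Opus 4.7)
My plan is to reduce, via Fubini, the claim to a pointwise-in-$y$ estimate on the kernel. Using $\dd y=\pi^{n/2}e^{|y|^2}\,\dd\gamma(y)$, the lemma follows if one shows
\[
\pi^{n/2}\,e^{|y|^2}\int_{(4B)^c}|K_{\nabla\Ls^{1/2}}(x,y)|\,\dd\gamma(x)\;\lesssim\; r_B^{-2},\qquad y\in B.
\]
Plugging in~\eqref{nucleoDL12OU}, the factor $e^{|x|^2}$ of the kernel cancels against $e^{-|x|^2}$ from $\dd\gamma$, and $e^{-|y|^2}$ cancels the outer $e^{|y|^2}$, reducing the claim to
\[
A(y):=\int_{(4B)^c}\int_0^1 \frac{(-\log r)^{-3/2}}{(1-r^2)^{(n+2)/2}}\,|rx-y|\,e^{-|x-ry|^2/(1-r^2)}\,\dd r\,\dd x \;\lesssim\; r_B^{-2}.
\]

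I would split the $r$-integral at $r=1/2$. For $r\in(0,1/2)$, the prefactor $(-\log r)^{-3/2}(1-r^2)^{-(n+2)/2}$ is bounded by a constant; the inequality $|rx-y|\le r|x-ry|+(1-r^2)|y|$, the change of variables $u=x-ry$, and standard Gaussian integrals in $x$ over $\R^n$ bound this part by $C(1+|y|)$. Admissibility $r_B\le\min(1,1/|c_B|)$ gives $|y|\le|c_B|+r_B\le 1/r_B+1\lesssim 1/r_B$, hence $\lesssim r_B^{-2}$ since $r_B\le 1$.

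On $(1/2,1)$, where the prefactor is singular like $(1-r)^{-(n+5)/2}$, I would further split at $r=1-c\,r_B^2$ for a sufficiently small $c>0$. For $r\in(1-c\,r_B^2,1)$ the key is the geometric input: $|x-y|\ge 3r_B$ on $(4B)^c\times B$ and $(1-r)|y|\lesssim c\,r_B\le|x-y|/3$ (using admissibility), so $|x-ry|\ge|x-y|/2$ and $e^{-|x-ry|^2/(1-r^2)}\le e^{-|x-y|^2/(4(1-r^2))}$; after the substitution $t=-\log r$, the $r$-integral is $\lesssim|x-y|^{-1}r_B^{-(n+1)}e^{-|x-y|^2/(cr_B^2)}$, which integrated over $\{x:|x-y|\ge 3r_B\}$ with the change of variables $w=(x-y)/r_B$ yields $\lesssim r_B^{-2}$. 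For the intermediate range $r\in(1/2,1-c\,r_B^2)$, the bound $|rx-y|\,e^{-|x-ry|^2/(1-r^2)}\lesssim\sqrt{1-r^2}\,e^{-|x-ry|^2/(2(1-r^2))}+(1-r^2)|y|\,e^{-|x-ry|^2/(1-r^2)}$ combined with the Gaussian integrals in $x$ over $\R^n$ reduces the estimate to
\[
\int_{1/2}^{1-c\,r_B^2}\!\!\bigl[(-\log r)^{-3/2}(1-r^2)^{-1/2}+|y|\,(-\log r)^{-3/2}\bigr]\dd r;
\]
the substitution $u=-\log r$ shows both pieces are $\lesssim r_B^{-2}$ (using $|y|\lesssim 1/r_B$ for the second).

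The hardest step will be the bookkeeping in the large-$r$ regime: only the close-to-$1$ range admits a ``Calder\'on--Zygmund'' treatment through the comparison $|x-ry|\gtrsim|x-y|$, while the small-$r$ and intermediate ranges both require direct $r$-integration, and the admissibility bound $|y|\lesssim 1/r_B$ must be exploited carefully to extract the precise exponent $r_B^{-2}$ rather than a weaker power.
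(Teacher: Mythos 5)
Your reduction via Fubini to the pointwise-in-$y$ estimate on $\int_{(4B)^c}\int_0^1(\ldots)\,\dd r\,\dd x$ is exactly the paper's reduction (the factor $e^{|x|^2-|y|^2}$ in the kernel cancels against the Gaussian density in $x$, and what is left is integrated against $\dd\gamma(y)$), and the treatment of the small-$r$ range $(0,1/2]$ coincides with the paper's: change $v=x-ry$, extend to $\R^n$, and use $|y|\lesssim 1/r_B$ from admissibility. Where you genuinely diverge is in the range $r\in(1/2,1)$. The paper introduces the $y$-dependent quantity $r_{B,y}=r_B/(2|y|)$, splits (when $r_{B,y}<1$) at $1-r_{B,y}$, substitutes $s=r_B/\sqrt{1-r^2}$, and quotes the auxiliary estimates $\cite[Lemma~7.1]{MM}$ involving the function $\varphi$. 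You instead split at the $y$-independent point $1-cr_B^2$ and compute all the Gaussian integrals from scratch: in the intermediate range $(1/2,1-cr_B^2]$ you extend to $\R^n$ and integrate out, producing $\int_{cr_B^2}^{1/2}s^{-2}\,\dd s\lesssim r_B^{-2}$ and $|y|\int_{cr_B^2}^{1/2}s^{-3/2}\,\dd s\lesssim|y|/r_B\lesssim r_B^{-2}$; in the range close to $1$ you exploit the geometric separation $|x-y|\geq 3r_B$ to compare $|x-ry|\gtrsim|x-y|$, integrate in $r$ via $u=|x-y|^2/(1-r)$, and then integrate in $x$ over $\{|x-y|\geq 3r_B\}$ by scaling $\rho=r_B w$, which indeed yields $r_B^{-2}$. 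Your approach trades the convenience of the black-box Lemma 7.1 of [MM] (and the $r_{B,y}\gtrless 1$ case split) for a more explicit and self-contained three-region analysis; both land on the same exponent and the argument is sound.

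One small remark: your claim that the inner $r$-integral near $1$ is bounded by $|x-y|^{-1}r_B^{-(n+1)}e^{-|x-y|^2/(cr_B^2)}$ is a slightly weaker statement than what the computation actually gives (the natural output is $|x-y|^{-2}r_B^{-n}e^{-c'|x-y|^2/r_B^2}$), but since $|x-y|\geq 3r_B$ the two are equivalent up to constants, and both integrate over $(4B)^c$ to $O(r_B^{-2})$; no gap results.
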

\begin{proof}
By~\eqref{nucleoDL12OU}
\begin{align*}
\|\nabla \Ls^{1/2} f\|_{L^1((4B)^c, \gamma)} &\lesssim \int_{(4B)^c}\int_B \int_0^1 \frac{|r x-y|e^{-\frac{|x-ry|^2}{1-r^2}}}{(1-r^2)^{\frac{n+2}{2}}(-\log r)^{3/2}} \, \dd r |f(y)| \, \dd \gamma (y) \, \dd x\\& = \int_B I(y)|f(y)|\, \dd \gamma(y)\,
\end{align*}
where for $y\in B$
\begin{equation}\label{integrale4B}
I(y)= \int_0^1 \frac{1}{(1-r^2)^{n/2 +1}(-\log r)^{3/2}} \int_{(4B)^c} |rx-y|e^{-\frac{|x-ry|^2}{1-r^2}}\, \dd x \, \dd r.
\end{equation}
The proof will then be complete if we can show $I(y)\lesssim r_B^{-2}$ for every $y\in B$. We split $I(y)$ into $I_1(y) + I_2(y)$ according to the splitting $(0,1)= (0,1/2] \cup (1/2,1)$. Thus
\begin{align*}
I_1(y) \lesssim \int_0^{1/2} \frac{1}{(-\log r)^{3/2}}\int_{(4B)^c} |rx-y|e^{-|x-ry|^2}\, \dd x \, \dd r.
\end{align*}
We make the change of variables $x-ry=v$ in the inner integral and then extend the integration domain to $\R^n$. This yields
\begin{align*}
I_1(y) \lesssim \int_0^{1/2} \frac{1}{(-\log r)^{3/2}}\int_{\R^n} |rv+(r^2-1)y|e^{-|v|^2}\, \dd v \, \dd r.
\end{align*}
Now observe that, since $|y|\leq |c_B|+r_B \leq 2/r_B$ by the admissibility condition of the ball $B$,
\[|rv+(r^2-1)y| \leq r|v|+|y|\leq r|v|+\frac{2}{r_B}\lesssim\frac{|v|+1}{r_B}\]
since $r_B\leq 1$, and hence
\[
I_1(y) \leq \frac{C}{r_B} \int_0^{1/2} \frac{1}{(-\log r)^{3/2}}\int_{\R^n} (|v|+1)e^{-|v|^2}\, \dd v \, \dd r \leq \frac{C}{r_B}.
\]
Therefore, \emph{a fortiori}, $I_1(y)\lesssim r_B^{-2}$. Before looking at $I_2(y)$, we observe that for every $r\in (1/2,1)$
\begin{align*}
|rx-y|\leq |x-ry| + (1-r^2)|y|,
\end{align*}
since $rx-y= r(x-ry) -(1-r^2)y$. Hence
\begin{align*}
I_2(y) &\lesssim \int_{1/2}^1 \frac{1}{(1-r^2)^{n/2 +2 }}\int_{(4B)^c} \bigg[\frac{|x-ry|}{\sqrt{1-r^2}} + \sqrt{1-r^2}|y|\bigg]e^{-\frac{|x-ry|^2}{1-r^2}}\, \dd x\, \dd r.
\end{align*}
By using the inequalities $se^{-s^2}\lesssim e^{-s^2/2}$ for $s>0$ and $e^{-s^2}\leq e^{-s^2/2}$, we get
\begin{align*}
I_2(y) & \lesssim \int_{1/2}^1 \frac{1 + \sqrt{1-r^2}|y|}{(1-r^2)^{2 }}(1-r^2)^{-n/2}\int_{(4B)^c}e^{-\frac{|x-ry|^2}{2(1-r^2)}}\, \dd x\, \dd r.
\end{align*}
We now separate the cases when $r_{B,y}\geq 1$ and $r_{B,y}<1$, where (see \cite[Lemma 7.1]{MM} for the notation)
\[
r_{B,y}= r_{B}/(2|y|).
\]
If $r_{B,y}\geq 1$, by \cite[Lemma 7.1, (i) and (iii)]{MM}
\begin{align*}
I_2(y) \leq \int_{1/2}^1 \frac{1 + \sqrt{1-r^2}|y|}{(1-r^2)^{2 }} \varphi \bigg(\frac{r_B}{\sqrt{1-r^2}}\bigg)\, \dd r
\end{align*}
which yields, after the change of variables $r_B / \sqrt{1-r^2}=s$,
\[I_2(y) \lesssim \frac{1}{r_B^2}\int_0^\infty (s+r_B |y|)\varphi(s)\, \dd s \lesssim \frac{1}{r_B^2}\int_0^\infty (s+1)\varphi(s)\, \dd s = \frac{C}{r_B^2}\]
since $r_B|y|\leq C$. 

If $r_{B,y}<1$, we split $(1/2,1)=(1/2, 1-r_{B,y}] \cup (1-r_{B,y},1)$ and $I_2(y) = I_2^1(y)+I_2^2(y)$ accordingly. By \cite[Lemma 7.1, (ii)]{MM}, $I_2^2(y)$ can be treated exactly as we did in the case $r_{B,y}\geq 1$, so we concentrate on $I_2^1(y)$ only. By the change of variable $x-ry=v$ in the inner integral, we get
\begin{align*}
I_2^1(y)& \lesssim \int_{1/2}^{1-r_{B,y}} \frac{1+\sqrt{1-r^2}|y|}{(1-r^2)^2} \int_{\R^n} e^{-|v|^2}\, \dd v\, \dd r
\\&\lesssim \int_{1/2}^{1-r_{B,y}} \frac{1+\sqrt{1-r}|y|}{(1-r)^2}\, \dd r \lesssim \frac{1}{r_{B,y}} + \frac{|y|}{\sqrt{r_{B,y}}}\lesssim \frac{1}{r_B^2}
\end{align*}
since $|y|\lesssim 1/r_B$ and by the definition of $r_{B,y}$.
\end{proof}
\begin{proof}[Proof of Theorem~\ref{teobound-RieszX1}]
We follow the same line as~\cite[Theorem 5.3]{MMV} to prove that
\[
\sup\; \{ \|\nabla \Ls^{-1/2} a\|_1 \colon \, \mbox{$a$ is an $X^1$-atom}\}<\infty.
\]
Since $\nabla \Ls^{-1/2}$ is of weak type $(1,1)$, this implies the boundedness $X^1(\gamma)\to L^1(\gamma)$ by a classical argument~\cite[p.\ 95]{Grafakos}.

Let $a$ be an $X^1$-atom supported in an admissible ball $B$. Since
\[\|\nabla \Ls^{-1/2} a\|_{L^1(\gamma)}= \|\nabla \Ls^{-1/2} a\|_{L^1(4B, \gamma)} + \|\nabla \Ls^{-1/2} a\|_{L^1((4B)^c,\gamma)}\]
it is enough to estimate the two summands separately. First, by Cauchy-Schwarz
\[\|\nabla \Ls^{-1/2} a\|_{L^1(4B,\gamma)} \leq \gamma(4B)^{1/2}\|\nabla \Ls^{-1/2} a\|_{L^2(4B,\gamma)}\lesssim \|a\|_2\,  \gamma(4B)^{1/2} \leq C\]
where we used the boundedness of $\nabla \Ls^{-1/2}$ on $L^2(\gamma)$, the size property of $a$ and the local doubling property of $\gamma $. As for the second summand, we write 
\[\nabla \Ls^{-1/2} a= \nabla \Ls^{1/2}\Ls^{-1} a \]
by the spectral theorem. By Proposition~\ref{support:presOU}, $\supp \Ls^{-1} a \subseteq \bar{B}$. Therefore, by Lemma~\ref{Lemma4BOU}, Cauchy-Schwarz inequality and Proposition~\ref{support:presOU} respectively
\[\|\nabla \Ls^{-1/2} a\|_{L^1((4B)^c,\gamma)} \lesssim r_B^{-2}\|\Ls^{-1} a\|_{L^1(B,\gamma)} \lesssim r_B^{-2}\gamma(B)^{1/2}\|\Ls^{-1} a\|_{L^2(\gamma)} \leq C.\]
The proof is complete.
\end{proof}

\section{Weak type $(1,1)$}\label{Sec:11OU}
Since $\gamma$ is locally, but not globally doubling, it is a standard procedure to split $\R^n\times \R^n$ as the union of a neighbourhood of the diagonal and of its complement, and to split accordingly the kernels of the operators. Thus, for $\delta>0$ we define
\begin{equation}\label{LandGOU}
N_\delta \coloneqq \left\{(x,y)\in \R^n\times \R^n \colon |x-y|\leq \frac{\delta}{1+|x|+|y|}\right\}, \qquad G\coloneqq N_1^c.
\end{equation}
We shall call both $N_1$ and $N_2$ the \emph{local regions} and $G$ the \emph{global region}, in analogy with~\cite{GMST}. We shall also fix once and for all a smooth function $\chi$ such that 
\[\chi_{N_1}\leq \chi \leq \chi_{N_2},\qquad |\nabla_x \chi(x,y)|+|\nabla_y \chi(x,y)|\leq \frac{C}{|x-y|}\quad \mbox{for } x\neq y,\] and define
\[
K_{\nabla \Ls^{-1/2},\loc}\coloneqq \chi K_{\nabla \Ls^{-1/2}}, \qquad K_{\nabla \Ls^{-1/2},\glob}\coloneqq K_{\nabla \Ls^{-1/2}}-K_{\nabla \Ls^{-1/2},\loc}.
\]
We shall denote the operators with kernel $K_{\nabla \Ls^{-1/2},\loc}$ and $K_{\nabla \Ls^{-1/2},\glob}$ by $\nabla \Ls^{-1/2}_\loc$ and $\nabla \Ls^{-1/2}_\glob$ respectively. Of course
\begin{equation}\label{loca+globOU}
\nabla \Ls^{-1/2}= \nabla \Ls^{-1/2}_\loc + \nabla \Ls^{-1/2}_\glob.
\end{equation}
Therefore, in order to prove the weak type $(1,1)$ of $\nabla \Ls^{-1/2}$ it will be enough to prove the weak type $(1,1)$ of both $\nabla \Ls^{-1/2}_\loc$ and $\nabla \Ls^{-1/2}_\glob$. The proof for $\nabla \Ls^{-1/2}_\loc$ (Proposition~\ref{proplocalrieszOU}) is rather standard, since by a general result (see~\cite[Theorem 2.7]{GMST}) this can be reduced to proving weak type $(1,1)$ boundedness of some classical Calder\'on-Zygmund operators. The key proof is then that concerning $\nabla \Ls^{-1/2}_\glob$.

To do this, we prove that $K_{\nabla \Ls^{-1/2},\glob}$ is controlled by a kernel $\overline M$ which arises naturally from the global part of the Mehler maximal operator. This idea is not completely new, as it comes from the paper~\cite{PerSor} of Pérez and Soria. Their proof is based on the following facts: (1) providing a kernel $\overline{K}$ \emph{equivalent} to the Mehler maximal kernel in the global region~\cite[Proposition 2.1]{MenPerSor} (2) proving that $\overline{K}$ is the kernel of an operator of weak type $(1,1)$~\cite[Theorem 2.3]{MenPerSor}, and (3) proving that the kernel of the Riesz transform is controlled by $\overline{K}$ in the global region~\cite[Proposition 2.2]{PerSor}. Though we follow the same order of ideas, the kernel $\overline{M}$ that we obtain (Proposition~\ref{maxglobalOU}) controls only from above the Mehler maximal kernel, except in a certain region (see Remark~\ref{ossbetaminore1OU}) where they are equivalent. This greatly simplifies the proofs, for the weak type $(1,1)$ of the operator associated with $\overline{M}$ can be easily deduced (Lemma~\ref{weakMbarOU}) by a kernel obtained by García-Cuerva, Mauceri, Meda, Sjögren and Torrea~\cite{GMMST}. Finally, we prove that $\overline{M}$ controls also the kernel of the Riesz transform in the global region (Proposition~\ref{rieszglobalOU}). Our proofs use a useful rescaling of the Mehler kernel introduced by García-Cuerva, Mauceri, Sjögren and Torrea in~\cite{GMST}. 

\smallskip

We begin by fixing the notation and obtaining some elementary results that will be used later on. Then, in Subsection~\ref{SecMehOU} we shall show that the kernel $\overline M$ arises naturally from the study of the Mehler maximal operator in the global region, and prove the weak type $(1,1)$ of its associated operator. Finally, in Subsection~\ref{SecRiesz} we shall prove Theorem~\ref{11OU} by proving the weak type $(1,1)$ of both $\nabla \Ls^{-1/2}_\loc$ and $\nabla \Ls^{-1/2}_\glob$. 

\smallskip

For $x,y\in \R^n$ set
\[
\alpha\coloneqq |x-y||x+y|,\qquad \beta\coloneqq \frac{|x-y|}{|x+y|}, \qquad \eta(x,y)\coloneqq e^{-\frac{|x|^2}{2}+\frac{|y|^2}{2}- \frac{|x-y||x+y|}{2}}.
\]
We also set $\theta=\theta(x,y)$ to be the angle between $x$ and $y$, and $\theta'$ the angle between $y-x$ and $y+x$. Observe that $\beta<1$ if and only if $(x,y)>0$. The results contained in the following lemma will be used all throughout the remainder of paper. Though their proofs are elementary, we provide all the details.
\begin{lemma}\label{lemmaprelOU}
Let $(x,y)\in \R^n$. Then
\begin{itemize}
\item[\emph{(1)}] if $(x,y)\in G$ and $\beta<1$, then $\alpha\geq 1/4$.
\item[\emph{(2)}] if $(x,y)\in G$, then $|x-y|\geq \frac{1}{2}(1+|x|)^{-1}$.
\item[\emph{(3)}] $|x \pm y|\geq |x|\sin\theta$. In particular, $\alpha\geq |x|^2\sin^2\theta$.
\item[\emph{(4)}] ${-|x|^2} +{|y|^2} -{|x-y||x+y|}\leq 0$.
\item[\emph{(5)}] $-\frac{|x|^2}{2} + \frac{|y|^2}{2} - \frac{|x+y||x-y|}{2}=\frac{-2|x|^2|y|^2 \sin^2\theta}{|x-y||x+y|(1+\cos \theta')}.$
\end{itemize}
\end{lemma}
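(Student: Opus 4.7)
The plan is to dispatch the items in the order \emph{(4), (3), (2), (1), (5)}, since each later item will build on algebraic identities established in the earlier ones, and the final item is the most computational.

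First, for (4) I would rewrite $|y|^2-|x|^2=(x+y)\cdot(y-x)$ and apply Cauchy--Schwarz to get $||y|^2-|x|^2|\leq |x+y||x-y|=\alpha$, which immediately gives $-|x|^2+|y|^2-\alpha\le 0$. For (3) I would decompose $y=y_\parallel+y_\perp$ with respect to $x$, so that $|y_\perp|=|y|\sin\theta$ and $x\pm y=(|x|\pm|y|\cos\theta)\hat{x}\pm y_\perp$; squaring gives $|x\pm y|^2\ge |y|^2\sin^2\theta$. Then, swapping the roles of $x$ and $y$ (decomposing $x$ along $y$) gives the symmetric bound $|x\pm y|\ge |x|\sin\theta$, from which $\alpha\ge |x|^2\sin^2\theta$ follows at once.

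For (2) I would split into the two cases $|y|\le 1+|x|$ and $|y|>1+|x|$. In the first case the global condition $|x-y|\ge(1+|x|+|y|)^{-1}$ combined with $1+|x|+|y|\le 2(1+|x|)$ gives the claim directly. In the second case the triangle inequality yields $|x-y|\ge |y|-|x|>1\ge \tfrac{1}{2}(1+|x|)^{-1}$. For (1), assuming $\beta<1$, i.e.\ $(x,y)>0$, I would use $|x+y|^2=|x|^2+|y|^2+2(x,y)\ge |x|^2+|y|^2\ge \tfrac{1}{2}(|x|+|y|)^2$, so that
\[
\alpha\ge \frac{|x|+|y|}{\sqrt 2(1+|x|+|y|)}.
\]
Setting $s=|x|+|y|$, from $(x,y)\in G$ and $|x-y|\le s$ I would derive $s(1+s)\ge 1$, hence $s\ge(\sqrt 5-1)/2$. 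Since $s/(1+s)$ is increasing, a direct numerical check shows $\alpha\ge(3-\sqrt 5)/(2\sqrt 2)>1/4$; this is a short, self-contained computation. I expect (1) to be the delicate part because of the need to track constants precisely, but no deep idea is required.

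The main computational step is (5). I would first rewrite the left-hand side as
\[
\tfrac{1}{2}\bigl[(x+y)\cdot(y-x)-|x+y||x-y|\bigr]=\tfrac{1}{2}|x+y||x-y|(\cos\theta'-1),
\]
using the definition of $\theta'$, and then invoke the half-angle identity $1-\cos\theta'=\sin^2\theta'/(1+\cos\theta')$. The key remaining identity is $\sin\theta'=2|x||y|\sin\theta/(|x-y||x+y|)$, which I would obtain by working in the $2$-plane spanned by $x$ and $y$ and computing the (oriented) area: $|(y-x)\wedge(y+x)|=2|x\wedge y|=2|x||y|\sin\theta$, whereas the same wedge equals $|y-x||y+x|\sin\theta'$. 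Substituting yields precisely the stated formula. The main conceptual obstacle is recognizing that the cross-product identity on $y\pm x$ is what links $\theta'$ to $\theta$; once this is in place the computation is routine.
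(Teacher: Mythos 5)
Your proposal is correct, and items (3) and (4) are essentially identical to the paper's argument (the paper phrases (3) via $|x\pm y|^2-|x|^2\sin^2\theta=(|x|\cos\theta\pm|y|)^2\geq 0$ and (4) via H\"older on $(y+x,y-x)$, which is the same as your Cauchy--Schwarz step). The genuine differences are in (1), (2) and (5). For (1) the paper splits into $|x|+|y|\leq 1$ (where $\alpha\geq|x-y|^2\geq\tfrac14$) and $|x|+|y|>1$ (where it uses only $|x+y|\geq\tfrac12(|x|+|y|)$ together with monotonicity of $t/(1+t)$); you instead use the stronger $|x+y|\geq(|x|+|y|)/\sqrt2$ once and close with the quadratic inequality $s(1+s)\geq 1$, which avoids the case split entirely and even yields a slightly better constant $\approx 0.27$. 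For (2) the paper simply cites \cite[p.\ 225]{GMMST}, whereas your two-case argument ($|y|\leq 1+|x|$ vs.\ $|y|>1+|x|$) makes the lemma self-contained. For (5) the paper declares it ``just a computation''; your derivation---writing the left side as $\tfrac12|x+y||x-y|(\cos\theta'-1)$, applying the half-angle identity, and obtaining $\sin\theta'=2|x||y|\sin\theta/(|x-y||x+y|)$ from the wedge-product identity $(y-x)\wedge(y+x)=-2\,x\wedge y$---is precisely the calculation the paper has in mind and supplies the missing link between $\theta$ and $\theta'$. All steps check out; no gaps.
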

\begin{proof}
To prove (1), first assume $|x|+|y|\leq 1$. Then
\[
|x-y||x+y| \geq |x-y|^2 \geq \frac{1}{(1+|x|+|y|)^2}\geq \frac{1}{4}.
\]
Since $\beta<1$, $|x+y|\geq |x|$ and $|x+y|\geq |y|$. Observe moreover that the function $t\mapsto t/(1+t)$ is increasing. Thus, if $|x|+|y|>1$,
\[
|x+y||x-y|\geq \frac{|x+y|}{1+|x|+|y|}\geq \frac{1}{2}\frac{|x|+|y|}{1+|x|+|y|}\geq \frac{1}{4}.
\]
The proof of (2) is shown in~\cite[pg.\ 225]{GMMST}. The point (3) holds since $|x|\sin \theta$ is the length of the projection of $x\pm y$ on the hyperplane orthogonal to $y$. To be more explicit,
\[
|x\pm y|^2 -|x|^2\sin^2\theta = |x|^2\cos^2\theta +|y|^2 \pm 2|x||y|\cos\theta = (|x|\cos\theta\pm|y|)^2\geq 0.
\]
As for (4), just observe that
\[
-{|x|^2} +{|y|^2} -{|x-y||x+y|} = (y+x,y-x) -|x-y||x+y|\leq0
\]
by H\"older's inequality. Equivalently, one can see (4) as a consequence of (5) which is just a computation.
\end{proof}
\subsection{The Mehler Maximal Operator}\label{SecMehOU}
It is well known that the Mehler maximal operator $\mathcal{M}^*$, namely the operator with kernel
\[
M^*(x,y) \coloneqq \sup_t M_t(x,y)
\]
with respect to the Lebesgue measure, is of weak type $(1,1)$. See, for example,~\cite{MenPerSor} and~\cite{GMMST}. Here, we provide a different proof of the weak type $(1,1)$ of its global part (Proposition~\ref{maxglobalOU} below), from which the following kernel arises naturally.
\begin{definition}\label{Msegnato}
Define
\begin{equation*}
\overline{M}(x,y) \coloneqq e^{|x|^2-|y|^2} \left( \frac{|x+y|}{|x-y|}\right)^{n/2} e^{-\frac{|x|^2}{2}+\frac{|y|^2}{2}- \frac{|x-y||x+y|}{2}} \Psi(x,y)\chi_{G}(x,y),
\end{equation*}
where
\[
\Psi(x,y)= \max\left(1,\frac{1}{\alpha^{n/2}}\right).
\]
\end{definition}
Though the following result plays no role in the proof of Theorem~\ref{11OU}, we provide its proof for it highlights the origin of the kernel $\overline{M}$.
\begin{proposition}\label{maxglobalOU}
For every $(x,y)\in G$, $M^*(x,y) \lesssim \overline M(x,y)$.
\end{proposition}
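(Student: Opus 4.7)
The plan is to rewrite the Mehler kernel via successive changes of variables that diagonalize the quadratic form in the exponent in terms of $|x-y|$ and $|x+y|$, and then reduce to a one-variable elementary optimization. First I would substitute $r=e^{-t}$ and use the algebraic identity
\[
rx-y=\tfrac{1+r}{2}(x-y)-\tfrac{1-r}{2}(x+y),
\]
which, together with $(x-y,x+y)=|x|^2-|y|^2$, gives
\[
\frac{|rx-y|^2}{1-r^2}=\frac{(1+r)|x-y|^2}{4(1-r)}+\frac{(1-r)|x+y|^2}{4(1+r)}-\frac{|x|^2-|y|^2}{2}.
\]
Then I would apply the further substitution $u=(1-r)/(1+r)\in(0,1)$, for which $(1+r)/(1-r)=1/u$ and $1-r^2=4u/(1+u)^2$. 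Using $(1+u)^n\le 2^n$, this yields
\[
M_t(x,y)\le C\,e^{(|x|^2-|y|^2)/2}\,u^{-n/2}\exp\!\left(-\frac{|x-y|^2}{4u}-\frac{u|x+y|^2}{4}\right).
\]

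The key observation is that $\beta=|x-y|/|x+y|$ is exactly the value of $u$ that equalizes the two terms in the exponent, so I would substitute $s=u/\beta$, which converts the exponent to $-\alpha(1/s+s)/4$ and the prefactor to $\beta^{-n/2}s^{-n/2}$. Factoring out $e^{-\alpha/2}$ via the identity $(1/s+s)/2-1=(1-s)^2/(2s)$, taking the supremum over $t>0$ reduces the proposition to proving the elementary inequality
\[
\sup_{s>0}\; s^{-n/2}\exp\!\left(-\frac{\alpha(1-s)^2}{4s}\right)\lesssim \Psi(x,y)=\max(1,\alpha^{-n/2}),
\]
after which multiplying through by $e^{(|x|^2-|y|^2)/2}\beta^{-n/2}e^{-\alpha/2}$ and restricting to $G$ gives exactly $\overline{M}(x,y)$.

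The final step is this elementary bound, which I would carry out by splitting on the size of $s$. For $s\ge 1$ the supremand is clearly at most $1\le\Psi$. For $s<1$, setting $t=1/s>1$ the supremand becomes $h(t):=t^{n/2}e^{-\alpha(t-1)^2/(4t)}$; I would split further into $1<t\le 2$, where $h(t)\le 2^{n/2}\lesssim 1\le\Psi$, and $t>2$, where $(t-1)^2/(4t)\ge t/16$ so $h(t)\le t^{n/2}e^{-\alpha t/16}$, whose maximum over $t>0$ is computed directly by elementary calculus to be $(8n/\alpha)^{n/2}e^{-n/2}\lesssim \alpha^{-n/2}\le\Psi$. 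Combining the two contributions gives $\lesssim \max(1,\alpha^{-n/2})=\Psi$, and the proof is complete.

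The conceptual obstacle is really just identifying the correct chain of substitutions $t\leadsto r\leadsto u\leadsto s$; once they are in place, all remaining work is routine one-variable calculus. Note that Lemma~\ref{lemmaprelOU} is not needed in this proof of the upper bound, as the estimate $s^{-n/2}e^{-\alpha(1-s)^2/(4s)}\lesssim \Psi$ is unconditional on $(x,y)$, and the restriction to $G$ enters only through the factor $\chi_G$ in the definition of $\overline{M}$.
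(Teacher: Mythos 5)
Your proof is correct and follows essentially the same route as the paper's: the chain of substitutions you use (your $u=(1-r)/(1+r)$ is exactly the paper's $\tau$-substitution, and your $s=u/\beta$ is the paper's $\sigma=s/\beta$) produces the same factored form of the Mehler kernel, and the problem reduces in both cases to the one-variable optimization $\sup_{\sigma}\sigma^{-n/2}e^{-\alpha\varphi(\sigma)/4}\lesssim\Psi(x,y)$. The only genuine (and minor) difference is in how that last estimate is proved: you show it holds unconditionally in $\alpha>0$ by an elementary three-way split on $\sigma$, whereas the paper distinguishes the cases $\beta<1$ and $\beta\geq 1$, invoking Lemma~\ref{lemmaprelOU}(1) and the explicit critical point $\sigma_0$; your version is slightly more self-contained, but the underlying argument is the same.
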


\begin{proof}
First of all, we perform the change of variable
\begin{equation}\label{tausOU}
\tau(s)\coloneqq \log \frac{1+s}{1-s}
\end{equation}
introduced in~\cite{GMMST}. Then
\[M^*(x,y)= \sup_{0<s<1} M_{\tau(s)}(x,y).\]
An easy computation shows that
\begin{align*}
M_{\tau(s)}(x,y) &= \frac{(1+s)^n}{(4s)^{n/2}} e^{-\frac{|y|^2}{2} + \frac{|x|^2}{2} -\frac{1}{4}(s|x+y|^2+\frac{1}{s}|x-y|^2)} \\&= e^{|x|^2-|y|^2} e^{-\frac{|x|^2}{2}+\frac{|y|^2}{2}}\frac{(1+s)^n}{(4s)^{n/2}} e^{-\frac{1}{4}(s|x+y|^2+\frac{1}{s}|x-y|^2)},
\end{align*}
so that
\[
M^*(x,y) = e^{|x|^2-|y|^2} \eta(x,y) \sup_{0<s<1}\frac{(1+s)^n}{(4s)^{n/2}} e^{-\frac{1}{4}(s|x+y|^2+\frac{1}{s}|x-y|^2 -2|x-y||x+y|)}.
\] 
We now make the substitution $s/\beta=\sigma$ in the supremum, and get
\begin{equation*}
M^*(x,y)=2^{-n}\, e^{|x|^2-|y|^2} \left( \frac{|x+y|}{|x-y|}\right)^{n/2} \eta(x,y)\sup_{0<\sigma< 1/\beta}\frac{\left(1+\sigma \beta\right)^n}{\sigma^{n/2}} e^{-\frac{1}{4}\alpha\varphi(\sigma)},
\end{equation*}
where
\[\varphi(\sigma)\coloneqq \sigma + \frac{1}{\sigma}-2 = \frac{(\sigma-1)^2}{\sigma}.\]
It remains then to estimate the supremum. The first observation is that the contribution of the term $(1+\sigma\beta)^n$ can be neglected, since $1\leq (1+\sigma \beta)^n \leq 2^n$ for $\sigma\in (0,1/\beta)$. Thus
\[
\sup_{0<\sigma< 1/\beta}\frac{\left(1+\sigma \beta\right)^n}{\sigma^{n/2}} e^{-\frac{1}{4}\alpha\varphi(\sigma)} \approx \sup_{0<\sigma< 1/\beta}\frac{1}{\sigma^{n/2}} e^{-\frac{1}{4}\alpha\varphi(\sigma)}.
\]
If $\beta< 1$, we have $\alpha\geq 1/4$ by Lemma~\ref{lemmaprelOU} (1). Thus 
\[
\sup_{0<\sigma< 1/\beta}\frac{1}{\sigma^{n/2}} e^{-\frac{1}{4}\alpha\varphi(\sigma)}\leq \sup_{0<\sigma<\infty}\frac{1}{\sigma^{n/2}} e^{-\frac{1}{16}\varphi(\sigma)}\leq C.
\]
Let now $\beta\geq1$, and observe that the function 
\[\sigma\mapsto \frac{1}{\sigma^{n/2}}e^{-\frac{1}{4}\alpha \varphi(\sigma)} \]
is increasing in the interval $(0,\sigma_0)$ and decreasing in $(\sigma_0,\infty)$, where
\[
\sigma_0 = \frac{\sqrt{4n^2+\alpha^2}-2n}{\alpha}.
\]
Therefore
\[
\sup_{0<\sigma< 1/\beta}\frac{1}{\sigma^{n/2}} e^{-\frac{1}{4}\alpha\varphi(\sigma)}\leq \frac{1}{\sigma_0^{n/2}} e^{-\frac{1}{4}\alpha \varphi(\sigma_0)}\lesssim \max\left(1,\frac{1}{\alpha^{n/2}}\right).
\]
In other words, we have proved that (see also Remark~\ref{ossbetaminore1OU} below)
\begin{equation}\label{supPsiOU}
\sup_{0<\sigma< 1/\beta}\frac{1}{\sigma^{n/2}} e^{-\frac{1}{4}\alpha\varphi(\sigma)}\lesssim \Psi(x,y)
\end{equation}
and this completes the proof.
\end{proof}
\begin{remark}\label{ossbetaminore1OU}
If $(x,y)\in G$ and $\beta <1$, then $\alpha\geq 1/4$ by Lemma~\ref{lemmaprelOU}, (1). Thus, $1\leq \Psi(x,y) \leq 2^n$ for every $(x,y)\in G$. In this case then $\overline M$ controls from above and below the kernel $M^*$, with absolute constants. This was first shown in~\cite[Proposition 2.1]{MenPerSor}. 
\end{remark}

As stated above, we now prove the weak type $(1,1)$ of the operator whose kernel is $\overline{M}$. By Proposition~\ref{maxglobalOU}, this implies the weak type $(1,1)$ of the global part of the Mehler maximal operator $\mathcal{M}^*_\glob$, which is the operator with kernel $M^*(1-\chi)$ with respect to the Lebesgue measure. 
\begin{lemma}\label{weakMbarOU}
The operator with kernel $\overline M(x,y)$ with respect to the Lebesgue measure is of weak type $(1,1)$. In particular, $\mathcal{M}^*_\glob$ is of weak type $(1,1)$.
\end{lemma}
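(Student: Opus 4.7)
The ``in particular'' part follows at once from the first claim together with Proposition~\ref{maxglobalOU} and the positivity of the kernels involved, so I focus on the operator $\overline{\mathcal M}$ with Lebesgue kernel $\overline M$. The plan is to reduce matters to the Gaussian model kernel whose weak type $(1,1)$ is already established in~\cite[Lemma~4.4]{GMMST}.

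The change of measure $\dd y = \pi^{n/2}e^{|y|^2}\,\dd\gamma(y)$ rewrites the claim as a weak type $(1,1)$ estimate from $L^1(\gamma)$ to $L^{1,\infty}(\gamma)$ for the operator whose kernel against $\gamma$ is $K(x,y)\coloneqq \pi^{n/2}\overline M(x,y)\,e^{|y|^2}$. Using the definition of $\eta$ and collecting exponents gives
\[
K(x,y)\ \approx\ e^{(|x|^2+|y|^2)/2}\left(\frac{|x+y|}{|x-y|}\right)^{n/2} e^{-\alpha/2}\,\Psi(x,y)\,\chi_G(x,y),
\]
which, up to the factor $\Psi(x,y)=\max(1,\alpha^{-n/2})$, is precisely the shape of the kernel treated in~\cite[Lemma~4.4]{GMMST}.

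To absorb $\Psi$, decompose $G=G_1\cup G_2\cup G_3$ with $G_1=\{\alpha\geq 1\}$, $G_2=\{\alpha<1,\beta<1\}$ and $G_3=\{\alpha<1,\beta\geq1\}$. On $G_1$ the factor $\Psi\equiv 1$; on $G_2$ Lemma~\ref{lemmaprelOU}\emph{(1)} forces $\alpha\geq1/4$, so $\Psi\leq 4^{n/2}$ is a harmless constant. In both cases $K$ is pointwise dominated by the GMMST kernel, and~\cite[Lemma~4.4]{GMMST} applies directly. On $G_3$ one has $|x+y|\leq|x-y|$, and the elementary identity $\left(|x+y|/|x-y|\right)^{n/2}\alpha^{-n/2}=|x-y|^{-n}$ gives
\[
K(x,y)\,\chi_{G_3}(x,y)\ \lesssim\ |x-y|^{-n}\,e^{(|x|^2+|y|^2)/2}\,e^{-\alpha/2}\,\chi_{G_3}(x,y),
\]
which again fits within the family of kernels treated in~\cite{GMMST}.

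The one genuine obstacle is the verification on $G_3$: although $\Psi$ is unbounded there, the constraint $\beta\geq1$ (equivalently $x\cdot y\leq 0$) pushes $y$ into the half-space opposite to $x$, and the resulting $|x-y|^{-n}$ singularity has to be compensated by the Gaussian decay in $e^{-\alpha/2}$ together with the factor $e^{-|y|^2/2}$ hidden in $\dd\gamma(y)$. Once this pointwise domination is matched with the relevant weak-type estimate from~\cite{GMMST}, the proof is complete; everything else---the measure change, the exponential rearrangement, and the case split---is routine.
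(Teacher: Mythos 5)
The central gap is a misidentification of what \cite[Lemma~4.4]{GMMST} actually provides. That lemma establishes the weak type $(1,1)$ (with respect to $\gamma$) of the operator whose Lebesgue kernel on $G$ is $e^{|x|^2-|y|^2}\bigl[(1+|x|)^n\wedge(|x|\sin\theta)^{-n}\bigr]$; it is not a statement about kernels of the form $\left(\frac{|x+y|}{|x-y|}\right)^{n/2}\eta(x,y)$. You assert that, after the change of measure and dropping $\Psi$, the remaining kernel ``is precisely the shape of the kernel treated in~\cite[Lemma~4.4]{GMMST}'', and on $G_1,G_2$ you conclude that ``\cite[Lemma~4.4]{GMMST} applies directly.'' But the pointwise domination
\[
\left(\frac{|x+y|}{|x-y|}\right)^{n/2}\eta(x,y)\Psi(x,y)\ \lesssim\ (1+|x|)^n\wedge(|x|\sin\theta)^{-n},\qquad (x,y)\in G,
\]
is precisely what remains to be proved, and it is the entire content of the lemma's proof; it is not automatic from the shape of the exponent. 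In particular the bound by $(|x|\sin\theta)^{-n}$ when $\Psi\equiv 1$ is the genuinely delicate case: it uses the angle identity in Lemma~\ref{lemmaprelOU}~(5), the elementary inequality $u^{n/2}e^{-u}\lesssim 1$, and a nontrivial uniform estimate on the auxiliary functions $g_\theta$ on $(4,\infty)$. None of this appears in your argument.

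Your assessment of which region is problematic is also inverted. On $G_3$ (and on $\{\alpha<1\}$ generally) the reduction $\left(\frac{|x+y|}{|x-y|}\right)^{n/2}\alpha^{-n/2}=|x-y|^{-n}$ together with $\eta\leq1$ (Lemma~\ref{lemmaprelOU}~(4)) immediately gives $|x-y|^{-n}$, which is bounded by $(1+|x|)^n$ via Lemma~\ref{lemmaprelOU}~(2) and by $(|x|\sin\theta)^{-n}$ via Lemma~\ref{lemmaprelOU}~(3)---this is the \emph{easy} direction. The hard work is the $\Psi\equiv1$ region, which you treat as if it were trivial. As written, the proposal leaves the substantive estimate unproved on every part of $G$, so the reduction to~\cite{GMMST} is not actually carried out.
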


\begin{proof}
We only prove that, for $(x,y)\in G$,
\[
\overline M(x,y)\lesssim e^{|x|^2 - |y|^2} (1+|x|)^n \wedge (|x|\sin \theta)^{-n}
\]
or, equivalently, that
\[
\left( \frac{|x+y|}{|x-y|}\right)^{n/2} \eta(x,y)\Psi(x,y)\lesssim (1+|x|)^n \wedge (|x|\sin \theta)^{-n}.
\]
The conclusion will then follow by~\cite[Lemma 4.4]{GMMST}.

We first consider the inequality involving $(1+|x|)^n$. We consider the cases $\Psi(x,y)=1$ and $\Psi(x,y)= 1/\alpha^{n/2}$ separately.

\textbf{1.} If $\Psi(x,y)=1$, then by Lemma~\ref{lemmaprelOU}, (4) it is enough to prove that
\begin{equation}\label{eq1+xOU}
\left( \frac{|x+y|}{|x-y|}\right)^{n/2} \lesssim (1+|x|)^n.
\end{equation}
If $|y|\leq 2|x|$ then by Lemma~\ref{lemmaprelOU}, (2) we get
\[
\frac{|x+y|}{|x-y|}\leq \frac{|x|+|y|}{|x-y|}\lesssim |x|(1+|x|)\leq (1+|x|)^2.
\]
If instead $|y|> 2|x|$, we have
\[|x-y|\geq |y|-|x| \geq |y|/2, \qquad |x-y|\geq |y|-|x|\geq |x|\]
so that
\[
\frac{|x+y|}{|x-y|}\leq \frac{|x|}{|x-y|} + \frac{|y|}{|x-y|}\leq C
\]
and hence \emph{a fortiori}~\eqref{eq1+xOU} holds.

\textbf{2.} If $\Psi(x,y)=1/{\alpha^{n/2}}$
\[
\left( \frac{|x+y|}{|x-y|}\right)^{n/2} \eta(x,y) \Psi(x,y) = \frac{\eta(x,y) }{|x-y|^n} \leq \frac{1}{|x-y|^n}\lesssim(1+|x|)^n,
\]
again by Lemma~\ref{lemmaprelOU}, (4) and (2).

We then concentrate on the inequality involving $(|x|\sin\theta)^{-n}$. We again consider the cases $\Psi(x,y)=1$ and $\Psi(x,y)=1/{\alpha^{n/2}}$ separately.

\textbf{1'.} Let $\Psi(x,y)=1$, and observe that the function $0\leq u\mapsto u^{n/2}e^{-u}$ is bounded. Thus, by Lemma~\ref{lemmaprelOU} (5)
\begin{multline*}
\left(\frac{|x+y|}{|x-y|}\right)^{n/2} \eta(x,y)
= \left(\frac{|x+y|}{|x-y|}\right)^{n/2} e^{\frac{-2|x|^2|y|^2 \sin^2\theta}{|x-y||x+y|(1+\cos \theta')}}
\\ \leq \left(\frac{|x+y|^2(1+\cos \theta')}{2|x|^2|y|^2\sin^2\theta}\right)^{n/2}\\ = C (|x|\sin\theta)^{-n} \left(\frac{|x+y|^2(1+\cos\theta')}{|y|^2}\right)^{n/2}.
\end{multline*}
Therefore, it remains only to prove that
\[
\frac{|x+y|^2(1+\cos\theta')}{|y|^2}\leq C.
\]
If $|x|\leq 2|y|$ this is straightforward. Otherwise, note that
\begin{align*}
\frac{|x+y|^2(1+\cos\theta')}{|y|^2} = g_\theta(|x|^2/|y|^2),
\end{align*}
where
\[g_\theta(t)= (1+t +2\sqrt{t}\cos\theta) \left(1+\frac{1-t}{\sqrt{(1+t)^2-4t\cos^2\theta}}\right).\]
Finally, observe that the functions $g_\theta$ are bounded on $(4,\infty)$ uniformly in $\theta$.

\textbf{2'.} If $\Psi=1/{\alpha^{n/2}}$, observe that
\[
\left( \frac{|x+y|}{|x-y|}\right)^{n/2} \eta(x,y)\Psi(x,y) \leq \frac{1}{|x-y|^n} \lesssim \frac{1}{(|x|\sin\theta)^n}
\]
by Lemma~\ref{lemmaprelOU}, (4) and (3). This completes the proof.
\end{proof}

\subsection{Proof of Theorem~\ref{11OU}}\label{SecRiesz}
As already said, we treat separately the local and the global part of $\nabla \Ls^{-1/2}$. By means of~\eqref{loca+globOU}, Theorem~\ref{11OU} will be a consequence of Propositions~\ref{proplocalrieszOU} and~\ref{rieszglobalOU} below.

\smallskip

In order to treat the local part $\nabla \Ls^{-1/2}_\loc$, we shall need the following lemma.
\begin{lemma}\label{lemmalocalOU}
Let $\mu,\nu\geq 0$ be such that $\mu >\nu +1$. Then, for every $(x,y)\in N_2$, $x\neq y$
\[
R_{\mu,\nu}(x,y)\coloneqq \int_0^1 \frac{|rx-y|^\nu }{(1-r^2)^{\frac{n+\mu}{2}}}e^{-\frac{|rx-y|^2}{1-r^2}}\, \dd r \leq \frac{C}{|x-y|^{n+\mu-\nu-2}}.
\]
\end{lemma}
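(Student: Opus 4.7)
The plan is to first reduce the estimate to a purely Gaussian integral. Writing $s=|rx-y|/\sqrt{1-r^2}$, the elementary inequality $s^\nu e^{-s^2}\lesssim e^{-s^2/2}$ absorbs the polynomial factor $|rx-y|^\nu$ into the exponential and produces an extra $(1-r^2)^{\nu/2}$ inside the integral. Setting $\lambda:=(n+\mu-\nu)/2$, which satisfies $\lambda>1$ thanks to the hypothesis $\mu>\nu+1$, the task becomes to prove
\[
\int_0^1 \frac{e^{-|rx-y|^2/(2(1-r^2))}}{(1-r^2)^{\lambda}}\,dr \lesssim \frac{1}{|x-y|^{2\lambda-2}}, \qquad (x,y)\in N_2.
\]

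Next I would split $(0,1)$ at $r=\tfrac{1}{2}$. On $(0,\tfrac{1}{2})$ the integrand is bounded, giving an $O(1)$ contribution, which is acceptable since $|x-y|\leq 2$ on $N_2$ and $2\lambda-2>0$. On $(\tfrac{1}{2},1)$ I substitute $t=1-r$ (so $1-r^2\asymp t$) and the goal reduces to estimating $\int_0^{1/2}t^{-\lambda}e^{-|(x-y)-tx|^2/(Ct)}\,dt$ by $|x-y|^{2-2\lambda}$. I would then subdivide the $t$-range using the threshold $t^{*}=|x-y|/(2|x|)$; note that in $N_2$ the admissibility inequality $|x-y|(1+|x|+|y|)\leq 2$ forces $|x|\leq 2/|x-y|$, and hence $t^{*}\geq |x-y|^2/4$. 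For $t\leq t^{*}$ the triangle inequality gives $|(x-y)-tx|\geq |x-y|/2$, so the substitution $u=|x-y|^2/(Ct)$ combined with the convergence of $\int_0^\infty u^{\lambda-2}e^{-u}\,du$ (guaranteed by $\lambda>1$) yields the target bound. For $t\geq 4t^{*}$ the reverse triangle inequality gives $|(x-y)-tx|\geq t|x|/2$, so the Gaussian factor is dominated by $e^{-ct|x|^2}$; the rescaling $u=ct|x|^2$ turns the integral into $|x|^{2\lambda-2}\int_{u_1}^\infty u^{-\lambda}e^{-u}\,du$ with $u_1\asymp |x||x-y|\lesssim 1$, and the elementary bound $\int_{u_1}^\infty u^{-\lambda}e^{-u}\,du\lesssim u_1^{1-\lambda}$ together with $|x|\leq 2/|x-y|$ again produces $|x-y|^{2-2\lambda}$. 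The narrow intermediate band $t^{*}<t<4t^{*}$, of length comparable to $t^{*}$, is handled by the crude bound $t^{-\lambda}\leq (t^{*})^{-\lambda}$ followed once more by the admissibility inequality.

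The main obstacle is the large-$t$ sub-region. Considered in isolation, the singularity $t^{-\lambda}$ at $t=0$ combined with a naive size estimate would only produce $|x-y|^{-2\lambda}$, off by a factor $|x-y|^{-2}$ from what is required. Recovering the correct exponent forces one to exploit simultaneously the Gaussian decay $e^{-ct|x|^2}$, which is available only because $|(x-y)-tx|$ grows linearly in $t$ once $t\gg t^{*}$, and the local admissibility bound $|x|\lesssim 1/|x-y|$ characterizing the region $N_2$; both ingredients together convert the $t^{-\lambda}$ singularity into precisely one extra factor $|x-y|^{2}$, giving the sharp estimate.
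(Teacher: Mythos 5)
Your proof is correct, but it takes a noticeably longer route than the paper's. Both arguments begin identically by absorbing $|rx-y|^\nu$ into the exponential, and both ultimately exploit the same feature of the local region $N_2$, namely that $|x||x-y|\leq 2$ there. The divergence comes next: after expanding $|rx-y|^2 = |x-y - (1-r)x|^2 \geq |x-y|^2 - 2(1-r)|x||x-y|$, the paper uses $|x||x-y|\leq 2$ to conclude $|rx-y|^2 \geq |x-y|^2 - 4(1-r)$, so that $e^{-c|rx-y|^2/(1-r^2)} \lesssim e^{-c|x-y|^2/(1-r^2)}$ with a \emph{uniform} bounded factor. This lets one replace $|rx-y|$ by $|x-y|$ in the exponential once and for all, and the entire estimate then collapses to the single change of variables $t=|x-y|^2/(1-r)$ and a Gamma integral, with no splitting of the domain of integration at all. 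Your argument instead keeps $|rx-y|$ in place and recovers the correct power of $|x-y|$ by a three-fold case analysis around the threshold $t^*=|x-y|/(2|x|)$, each piece requiring its own substitution and the admissibility bound $|x|\lesssim 1/|x-y|$. All three pieces check out (the middle band uses $t^*\gtrsim |x-y|^2$, the large-$t$ band uses $|x||x-y|\lesssim 1$, and you correctly noted that the trivial $O(1)$ bound on $(0,\tfrac12)$ is harmless because $|x-y|\lesssim 1$ on $N_2$ and $\lambda>1$), so the proposal is a complete, if more laborious, proof. What the paper's observation buys is precisely that it eliminates your three cases: feeding the $N_2$ bound into the exponent before touching the integral makes the dependence on $x$ disappear, after which the exponent hypothesis $\mu>\nu+1$ is needed only to make a single Gamma integral converge.
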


\begin{proof}
Assume $(x,y)\in N_2$ and $x\neq y$. Observe that
\begin{align*}
R_{\mu,\nu}(x,y)\lesssim \int_0^1 \frac{1}{(1-r^2)^{\frac{n+\mu-\nu}{2}}} e^{-\frac{1}{2}\frac{|rx-y|^2}{1-r^2}}\, \dd r
\end{align*}
since the function $s \mapsto s^\nu e^{-s^2/2}$ is bounded for every $s\geq 0$ and $\nu\geq 0$. Now observe that
\begin{align*}
|rx-y|^2 \geq |x-y|^2 -2(1-r)|x||x-y|\geq |x-y|^2 -4(1-r)
\end{align*}
where the last inequality holds since for all $(x,y)\in N_2$
\[
|x||x-y|\leq \frac{2|x|}{1+|x|+|y|}\leq 2.
\]
Thus
\begin{align*}\label{lemmalocal1}
R_{\mu,\nu}(x,y) &\lesssim \int_0^1 \frac{1}{(1-r^2)^{\frac{n+\mu-\nu}{2}}} e^{-\frac{1}{2}\frac{|x-y|^2}{1-r^2}}\, \dd r \lesssim \int_0^1 \frac{1}{(1-r)^{\frac{n+\mu-\nu}{2}}} e^{-c\frac{|x-y|^2}{1-r}}\, \dd r
\end{align*}
and by performing the change of variable $|x-y|^2/(1-r)=t$ we get
\[
R_{\mu,\nu}(x,y) \leq \frac{C}{|x-y|^{n+\mu-\nu-2}} \int_{0}^\infty t^{(n+\mu-\nu -4)/2}e^{-ct}\, \dd t\leq \frac{C}{|x-y|^{n+\mu-\nu-2}}
\]
where the last inequality holds since by assumption
\[
(n+\mu+\nu-4)/2 >(n-3)/2\geq -1.\qedhere
\]
\end{proof}
\begin{proposition}\label{proplocalrieszOU}
For every $j=1,\dots,n$, $\nabla \Ls^{-1/2}_\loc$ is of weak type $(1,1)$ .
\end{proposition}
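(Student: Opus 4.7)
The plan is to invoke the ``local'' Calder\'on--Zygmund theory of~\cite[Theorem 2.7]{GMST}: an operator whose integral kernel is supported in a local region (such as $N_2$) and which is both $L^2(\gamma)$-bounded and a classical Calder\'on--Zygmund kernel in the Euclidean sense, is of weak type $(1,1)$ with respect to $\gamma$. The $L^2(\gamma)$-boundedness of $\nabla\Ls^{-1/2}$ follows from the spectral theorem via the identity $\|\nabla f\|_{L^2(\gamma)}^2 \lesssim (\Ls f,f)_{L^2(\gamma)}$; the global part $\nabla\Ls^{-1/2}_\glob$ can be handled by a Schur-type estimate using the kernel bound obtained in Subsection~\ref{SecRiesz}, so that $\nabla\Ls^{-1/2}_\loc$ is $L^2(\gamma)$-bounded by difference. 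It therefore remains to establish the two pointwise estimates
\[
|K_{\nabla\Ls^{-1/2}}(x,y)|\lesssim |x-y|^{-n},\qquad |\nabla_x K_{\nabla\Ls^{-1/2}}(x,y)|+|\nabla_y K_{\nabla\Ls^{-1/2}}(x,y)|\lesssim |x-y|^{-n-1},
\]
for $(x,y)\in N_2$, $x\neq y$.

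Starting from the representation of $K_{\nabla\Ls^{-1/2}}$ in~\eqref{KernelRieszOU2} and using the elementary bound $(-\log r)^{-1/2}\leq C(1-r^2)^{-1/2}$ (which follows from $-\log r\geq 1-r\geq (1-r^2)/2$), the size estimate reduces to controlling an integral of the form $R_{3,1}(x,y)$ in the notation of Lemma~\ref{lemmalocalOU}. Since $(\mu,\nu)=(3,1)$ satisfies $\mu>\nu+1$, Lemma~\ref{lemmalocalOU} yields the required $|x-y|^{-n}$ decay. The smoothness estimate is analogous: differentiating the integrand with respect to $x_k$ or $y_k$ produces two contributions, one with an extra $(1-r^2)^{-1/2}$ factor and one with a factor $|rx-y|^2(1-r^2)^{-1}$, which are dominated respectively by $R_{3,0}(x,y)$ and $R_{5,2}(x,y)$. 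Both pairs $(3,0)$ and $(5,2)$ satisfy $\mu>\nu+1$, so Lemma~\ref{lemmalocalOU} delivers the $|x-y|^{-n-1}$ decay. Finally, multiplication by the cutoff $\chi$ preserves both bounds thanks to the Leibniz rule and the pointwise estimate $|\nabla\chi|\leq C/|x-y|$, the extra $|x-y|^{-1}$ being absorbed by the size bound on $K_{\nabla\Ls^{-1/2}}$.

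The main technical hurdle is the careful identification of each differentiated term as an $R_{\mu,\nu}$ with admissible exponents and the uniform control of the logarithmic factor $(-\log r)^{-1/2}$; once this bookkeeping is in place, Proposition~\ref{proplocalrieszOU} follows immediately from~\cite[Theorem 2.7]{GMST}.
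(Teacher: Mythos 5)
Your proof is essentially identical to the paper's: both bound the size of the local kernel by $R_{3,1}(x,y)$ and the gradient by $R_{3,0}(x,y)+R_{5,2}(x,y)+R_{3,1}(x,y)|x-y|^{-1}$ (the last term coming from the Leibniz rule applied to the cutoff $\chi$), apply Lemma~\ref{lemmalocalOU}, and conclude via~\cite[Theorem~2.7]{GMST}. Your preliminary discussion of $L^2(\gamma)$-boundedness of $\nabla\Ls^{-1/2}_\loc$ via a Schur estimate for the global part is an extra step not present in the paper's argument, which takes as input only the standard $L^2(\gamma)$-boundedness of the full operator $\nabla\Ls^{-1/2}$.
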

\begin{proof}
Let $(x,y)\in N_2$, $x\neq y$. Observe that by~\eqref{KernelRieszOU2} and Lemma~\ref{lemmalocalOU},
\[\abs{K_{\nabla \Ls^{-1/2},\loc}(x,y)}\lesssim R_{3,1}(x,y)\chi(x,y)\lesssim|x-y|^{-n}\]
and
\begin{multline*}
\abs{\nabla_{x} K_{\nabla \Ls^{-1/2},\loc}(x,y)} + \abs{\nabla_{y} K_{\nabla \Ls^{-1/2},\loc}(x,y)}\\ \lesssim( R_{3,0}(x,y) + R_{5,2}(x,y) + R_{3,1}(x,y)|x-y|^{-1})\chi(x,y)\lesssim|x-y|^{-(n+1)}
\end{multline*}
for every $j=1,\dots,n$. Therefore, the conclusion follows by~\cite[Theorem 2.7]{GMST}.
\end{proof}

\begin{proposition}\label{rieszglobalOU}
For every $j=1,\dots,n$ and $(x,y)\in G$
\begin{equation}\label{rieszglobalOUeq}
|K_{\nabla \Ls^{-1/2}}(x,y)|\lesssim\overline{M}(x,y).
\end{equation}
In particular, $\nabla \Ls^{-1/2}_\glob$ is of weak type $(1,1)$ for every $j=1,\dots,n$.
\end{proposition}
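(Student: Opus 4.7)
I would first note that, combined with Proposition~\ref{proplocalrieszOU} and the splitting \eqref{loca+globOU}, the second conclusion of the proposition (weak type $(1,1)$ of $\nabla\Ls^{-1/2}_\glob$) will follow from the kernel inequality \eqref{rieszglobalOUeq} via Lemma~\ref{weakMbarOU}; so the real work is to prove \eqref{rieszglobalOUeq}. The strategy parallels the proof of Proposition~\ref{maxglobalOU}: start from the kernel formula \eqref{KernelRieszOU2} and perform the change of variables $r=(1-s)/(1+s)$, i.e.\ $t=\tau(s)$. The Mehler-kernel calculations done there transfer verbatim, and the additional vector factor transforms as $e^{-t}x-y=[(x-y)-s(x+y)]/(1+s)$, yielding
\[
K_{\nabla\Ls^{-1/2}}(x,y)=-C\,e^{(|x|^2-|y|^2)/2}\int_0^1\frac{\tau(s)^{-1/2}(1+s)^{n-1}}{(4s)^{(n+2)/2}}\bigl[(x-y)-s(x+y)\bigr]e^{-\frac{|x-y|^2}{4s}-\frac{s|x+y|^2}{4}}ds.
\]

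Next I would take absolute values using $|(x-y)-s(x+y)|\le u+sv$ with $u=|x-y|$, $v=|x+y|$, substitute $\sigma=s/\beta$ so that the Gaussian exponent becomes $-\alpha/2-\alpha\,\varphi(\sigma)/4$ (with $\varphi$ as in the proof of Proposition~\ref{maxglobalOU}), and bound $\tau(s)^{-1/2}\le(2s)^{-1/2}$ using $\tau'(s)=2/(1-s^2)\ge 2$, together with $(1+\sigma\beta)^{n-1}\le 2^{n-1}$ (since $s<1$). Absorbing the factor $e^{-\alpha/2}$ into $\eta(x,y)$ and using $|x-y|/\sqrt{\beta}=\sqrt{\alpha}$, I obtain
\[
|K_{\nabla\Ls^{-1/2}}(x,y)|\lesssim e^{|x|^2-|y|^2}\eta(x,y)\left(\frac{|x+y|}{|x-y|}\right)^{n/2}\sqrt{\alpha}\int_0^{1/\beta}\frac{1+\sigma}{\sigma^{(n+3)/2}}e^{-\alpha\varphi(\sigma)/4}d\sigma.
\]
Comparing with Definition~\ref{Msegnato}, the problem reduces to establishing
\[
\sqrt{\alpha}\int_0^{1/\beta}\frac{1+\sigma}{\sigma^{(n+3)/2}}e^{-\alpha\varphi(\sigma)/4}d\sigma\lesssim\max(1,\alpha^{-n/2})=\Psi(x,y),
\]
which is the integral analogue of the supremum bound \eqref{supPsiOU}.

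To prove this I would split the $\sigma$-range into three pieces. On $(0,1/2]$, using $\varphi(\sigma)\ge 1/(4\sigma)$ (valid since $(1-\sigma)^2\ge 1/4$ there) and the substitution $u=\alpha/(16\sigma)$, the integral reduces to a $\Gamma$-integral bounded by $\lesssim\alpha^{-(n+1)/2}$. On $[1/2,2]$ (when it meets $(0,1/\beta)$), $\sigma^{-(n+3)/2}$ is bounded and $\varphi(\sigma)\ge c(\sigma-1)^2$, so Gaussian concentration yields $\lesssim\min(1,\alpha^{-1/2})$. On $[2,1/\beta)$, which is nonempty only when $\beta<1$—and hence, by Lemma~\ref{lemmaprelOU}(1), $\alpha\ge 1/4$—one uses $\varphi(\sigma)\ge\sigma/4$ for exponential decay in $\alpha$. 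Multiplying the combined bound by $\sqrt{\alpha}$ gives $\lesssim 1$ when $\alpha\gtrsim 1$ and $\lesssim\alpha^{-n/2}$ when $\alpha<1$ (in which case $\beta\ge 1$ forces the third piece to be empty), matching $\Psi(x,y)$ in both regimes.

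The main obstacle is precisely this last step. Compared to the supremum bound \eqref{supPsiOU} of the Mehler maximal proof, the integrand now carries an extra singularity $\sigma^{-3/2}$ at $0$, coming from the vector factor $rx-y$ and the additional $(1-r^2)^{-1}$ of the Riesz kernel; the bound only closes because the Gaussian $e^{-\alpha/(4\sigma)}$ produces an $\alpha^{-(n+1)/2}$ near $\sigma=0$ that is exactly compensated by the $\sqrt{\alpha}$ prefactor extracted from the joint effect of $\tau(s)^{-1/2}$ and $|x-y|/\sqrt{\beta}=\sqrt{\alpha}$. Orchestrating this cancellation uniformly across the cases $\beta<1$ and $\beta\ge 1$ (crucially invoking Lemma~\ref{lemmaprelOU}(1)) is the non-routine part of the argument.
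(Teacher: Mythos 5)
Your proposal is correct, and up to the reduction to the inequality $\sqrt{\alpha}\int_0^{1/\beta}\frac{1+\sigma}{\sigma^{(n+3)/2}}e^{-\alpha\varphi(\sigma)/4}\,\dd\sigma\lesssim\Psi(x,y)$ it matches the paper's argument verbatim: same change of variables $t=\tau(s)$, same Gaussian rewriting extracting $e^{|x|^2-|y|^2}\eta(x,y)\beta^{-n/2}$, same triangle inequality producing the $\sqrt{\alpha}(1+\sigma)$ factor. Where you genuinely depart from the paper is in proving that final integral estimate. The paper factors the integrand as $\left(\sigma^{-n/2}e^{-\alpha\varphi(\sigma)/4}\right)^{1-1/n}\cdot\sigma^{-2}e^{-\alpha\varphi(\sigma)/(4n)}$, pulls out $\Psi(x,y)^{1-1/n}$ via the already-proved supremum bound \eqref{supPsiOU}, and then evaluates the remaining integral exactly by the substitution $t=\alpha\varphi(\sigma)$ with the explicit inverse branches $\sigma_\pm(t)$ and elementary lower bounds on $1-\sigma_-(t)$ and $\sigma_+(t)-1$. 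You instead split the $\sigma$-range into $(0,1/2]$, $[1/2,2]$ and $[2,1/\beta)$ and bound each piece by hand (a $\Gamma$-integral giving $\alpha^{-(n+1)/2}$ near $0$, Gaussian concentration in the middle, exponential decay at the tail, with Lemma~\ref{lemmaprelOU}(1) supplying $\alpha\geq 1/4$ when the tail is nonempty). Both routes are valid; yours is more computational and self-contained, the paper's recycles the supremum estimate and gives a cleaner exponent bookkeeping. One small imprecision worth flagging: $\alpha<1$ does not by itself force $\beta\geq 1$ — the contrapositive of Lemma~\ref{lemmaprelOU}(1) only gives $\beta\geq 1$ when $\alpha<1/4$ — but this does not damage the argument, since your third-piece estimate $\sqrt{\alpha}\,e^{-c\alpha}\lesssim 1\leq\Psi(x,y)$ is valid whenever that piece is nonempty, independently of the value of $\alpha$.
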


\begin{proof}
Let $(x,y)\in G$, and observe first that for every $j=1,\dots,n$ 
\[
|K_{\nabla \Ls^{-1/2}}(x,y)|\lesssim \int_0^\infty \frac{e^{-t}}{(1-e^{-2t})^{(n+2)/2}}\frac{|e^{-t}x -y|}{\sqrt{1-e^{-2t}}} e^{-\frac{|e^{-t}x - y|^2}{1-e^{-2t}}}\, \dd t\eqqcolon R(x,y)
\]
since $t\geq (1-e^{-2t})/2$ for every $t\geq 0$. With the change of variables $t=\tau(s)$ (recall~\eqref{tausOU}) in the integral defining the kernel $R$, 
\begin{align*}
|K_{\nabla \Ls^{-1/2}}(x,y)|& \lesssim \int_0^1 \frac{1}{s^{(n+3)/2}} \abs{(1-s)x-(1+s)y} e^{-\frac{\abs{(1-s)x-(1+s)y}^2}{4s}}\, \dd s\\& = e^{|x|^2-|y|^2} \eta(x,y) \int_0^1 \frac{1}{s^{(n+3)/2}}\abs{(1-s)x-(1+s)y} e^{-\frac{1}{4}\alpha \varphi(s/\beta)}\, \dd s,
\end{align*}
where we used that $1+s\geq 1$ for every $s\in (0,1)$. Now make the change of variables $s/\beta =\sigma$ in the integral, which gives
\begin{multline*}
\int_0^1 \frac{1}{s^{(n+3)/2}}\abs{(1-s)x -(1+s)y} e^{-\frac{1}{4}\alpha \varphi(s/\beta)}\, \dd s \\ = \frac{1}{\beta^{n/2}} \int_0^{1/\beta} \frac{1}{\sigma^{(n+3)/2}}\frac{\abs{(1-\sigma\beta)x - (1+\sigma\beta)y}}{\sqrt{\beta}}e^{-\frac{1}{4}\alpha \varphi(\sigma)}\, \dd \sigma.
\end{multline*}
Observe moreover that
\begin{align*}
\frac{|(1-\sigma \beta)x -(1+\sigma\beta)y|}{\sqrt{\beta}} = \frac{|(x-y) -\sigma\beta(x+y)|}{\sqrt{\beta}} \leq \frac{|x-y|+\sigma|x-y|}{\sqrt{\beta}} = (1+\sigma)\sqrt{\alpha}.
\end{align*}
Therefore, we proved that for every $(x,y)\in G$
\begin{equation*}
|K_{\nabla \Ls^{-1/2}}(x,y)|\lesssim e^{|x|^2-|y|^2}\left(\frac{|x+y|}{|x-y|}\right)^{n/2} \eta(x,y) \sqrt{\alpha}\int_0^{1/\beta} \frac{(1+\sigma)}{\sigma^{(n+3)/2}}e^{-\frac{1}{4}\alpha \varphi(\sigma)}\, \dd \sigma.
\end{equation*}
It remains to prove that, if $(x,y)\in G$,
\begin{equation}\label{ineqPhi}
\sqrt{\alpha}\int_0^{1/\beta} \frac{(1+\sigma)}{\sigma^{(n+3)/2}}e^{-\frac{1}{4}\alpha \varphi(\sigma)}\, \dd \sigma \lesssim \Psi(x,y).
\end{equation}
Observe first that
\begin{multline*}
\sqrt{\alpha}\int_0^{1/\beta} \frac{1+\sigma}{\sigma^{(n+3)/2}}e^{-\frac{1}{4}\alpha \varphi(\sigma)}\, \dd \sigma\\ \leq \sqrt{\alpha}\sup_{0<\sigma<1/\beta} \left( \frac{1}{\sigma^{n/2}}e^{-\frac{1}{4}\alpha \varphi(\sigma)}\right)^{1-\frac{1}{n}} \int_0^{1/\beta}\frac{ (1+\sigma)}{\sigma^2} e^{-\frac{1}{4n}\alpha\varphi(\sigma)}\, \dd \sigma
\\ \lesssim \Psi(x,y)^{1-\frac{1}{n}} \sqrt{\alpha}\int_0^{1/\beta} \frac{(1+\sigma)}{\sigma^2} e^{-\frac{1}{4n}\alpha\varphi(\sigma)}\, \dd \sigma.
\end{multline*}
The last inequality holds by~\eqref{supPsiOU}. We now split the integral as
\[
\int_0^{1/\beta} \frac{1+\sigma}{\sigma^2} e^{-\frac{1}{4n} \alpha\varphi(\sigma)}\, \dd \sigma =\int_0^{\min(1,1/\beta)}\dots \, \dd \sigma + \int_{\min(1,1/\beta)}^{1/\beta}\dots\, \dd \sigma,
\] 
where we mean that the second integral is identically zero if $\beta\geq 1$. Since $\varphi$ is invertible in $(0,1)$ and $(1,\infty)$, it is invertible in both the integrals above, so that by the change of variables $\alpha \varphi(\sigma)=t$ we get
\begin{equation}\label{1nPsi1}
\sqrt{\alpha}\int_0^{\min(1,1/\beta)} \frac{1+\sigma}{\sigma^{2}} e^{-\frac{1}{4n} \alpha\varphi(\sigma)}\, \dd \sigma \leq \frac{1}{\sqrt{\alpha}} \int_0^{\infty}\frac{1}{1-\sigma_-(t)} e^{-\frac{t}{4n}}\, \dd t
\end{equation}
while
\begin{equation}\label{1nPsi2}
\sqrt{\alpha}\int_{\min(1,1/\beta)}^{1/\beta} \frac{1+\sigma}{\sigma^2} e^{-\frac{1}{4n} \alpha\varphi(\sigma)}\, \dd \sigma \leq \frac{C}{\sqrt{\alpha}} \int_0^{\infty}\frac{1}{\sigma_+(t)-1} e^{-\frac{t}{4n}}\, \dd t,
\end{equation}
where
\[\sigma_-(t)=1 -\frac{\sqrt{t^2 +4\alpha t}-t}{2\alpha},\qquad \sigma_+(t) =1 + \frac{\sqrt{t^2 +4\alpha t}+t}{2\alpha}.\]
It is not hard to see that
\begin{align*}
1-\sigma_-(t) = \frac{\sqrt{t^2 +4\alpha t} -t}{2\alpha}\geq C\min \left(1,\frac{\sqrt{t}}{\sqrt{\alpha}}\right)=\frac{C}{\sqrt{\alpha}}\min\left(\sqrt{\alpha},\sqrt{t}\right)
\end{align*}
by the inequality $\sqrt{1+z} -1\geq C \min (z,\sqrt{z})$. In other words,
\[
\frac{1}{1-\sigma_-(t)} \lesssim\sqrt{\alpha} \max\left(\frac{1}{\sqrt{\alpha}}, \frac{1}{\sqrt{t}}\right).
\]
Moreover
\[
\sigma_+(t)-1 = \frac{t+\sqrt{t^2+4\alpha t}}{2\alpha}\geq2 \frac{\sqrt{t}}{\sqrt{\alpha}}.
\]
Therefore, from~\eqref{1nPsi1}
\begin{align*}
\sqrt{\alpha}\int_0^{\min(1,1/\beta)} \frac{1+\sigma}{\sigma^{2}} e^{-\frac{1}{4n} \alpha\varphi(\sigma)}\, \dd \sigma &\lesssim \frac{1}{\sqrt{\alpha}} \int_0^{\infty}\frac{1}{1-\sigma_-(t)} e^{-\frac{t}{4n}}\, \dd t \\&\lesssim \int_0^\infty \max\left(\frac{1}{\sqrt{\alpha}}, \frac{1}{\sqrt{t}}\right) e^{-\frac{t}{4n}}\, \dd t\lesssim \max\left(\frac{1}{\sqrt{\alpha}}, 1\right),
\end{align*}
and from~\eqref{1nPsi2}
\[\sqrt{\alpha}\int_{\min(1,1/\beta)}^{1/\beta} \frac{1+\sigma}{\sigma^2} e^{-\frac{1}{4n} \alpha\varphi(\sigma)}\, \dd \sigma \lesssim \frac{1}{\sqrt{\alpha}} \int_0^{\infty}\frac{1}{\sigma_+(t)-1} e^{-\frac{t}{4n}}\, \dd t\leq C.\]
The proof of~\eqref{rieszglobalOUeq} is now complete. The weak type $(1,1)$ of $\nabla \Ls^{-1/2}_\glob$ is then a consequence of the straightforward observation that
$|K_{\nabla \Ls^{-1/2},\glob}|\leq |K_{\nabla \Ls^{-1/2}}|\chi_G$.
\end{proof}

\subsection*{Acknowledgements} It is a great pleasure to thank Giancarlo Mauceri and Stefano Meda for several fruitful discussions and their constant help and support.

	\noindent {\small
	\Addresses
	}
\end{document}